\newtheorem{assumption}[theorem]{Assumption}
\newcommand{\be}{\begin{equation}}
    \newcommand{\ee}{\end{equation}}
\newcommand{\bee}{\begin{equation*}}
    \newcommand{\eee}{\end{equation*}}
\newcommand{\bea}{\begin{eqnarray}}
    \newcommand{\eea}{\end{eqnarray}}
\newcommand{\beaa}{\begin{eqnarray*}}
    \newcommand{\eeaa}{\end{eqnarray*}}
\newcommand{\st}{\,\textrm{s.t.}\,}  
\newcommand{\R}{\mathbb{R}}  
\newcommand{\C}{\mathbb{C}}
\newcommand{\bba}{\mathbf{a}}  
\newcommand{\bbH}{\mathbf{H}}  
\newcommand{\cA}{\mathcal{A}}  
\newcommand{\cD}{\mathcal{D}}
\newcommand{\cI}{\mathcal{I}}  
\newcommand{\cJ}{\mathcal{J}}  
\newcommand{\cM}{\mathcal{M}}  
\newcommand{\cT}{\mathcal{T}}  
\newcommand{\cO}{{\mathcal{O}}}
\newcommand{\tr}{\mathrm{tr}}
\newcommand{\prox}{\mathbf{prox}} 
\newcommand{\iprod}[2]{\left\langle{#1},{#2}\right\rangle}
\newcommand{\ivec}{\mathrm{vec}}
\newcommand{\svec}{\mathrm{svec}}
\newcommand{\opt}{\mathrm{opt}}
\newcommand{\mol}[1]{\phi_{#1}}
\newcommand{\bes}[1]{\begin{equation} \left\{ \begin{split}#1\end{split} \right. \end{equation}}
\DeclareMathOperator*{\argmin}{arg\,min}
\title{A Semi-smooth Newton Method For Solving semidefinite programs in
electronic structure calculations}
\author{ Yongfeng Li \thanks{Beijing International Center for Mathematical Research, Peking University, CHINA (YongfengLi@pku.edu.cn).}
 \and
  Zaiwen Wen\thanks{Beijing International Center for Mathematical
Research, Peking University, CHINA (wenzw@pku.edu.cn).
Research supported in part by NSFC grant 91330202, and by the National Basic Research Project under the grant 2015CB856002.}
\and 
Chao Yang \thanks{Computational Research
Division, Lawrence Berkeley National Laboratory, Berkeley, UNITED
STATES (cyang@lbl.gov).  Support for this work was provided through 
the Scientific Discovery through Advanced Computing (SciDAC) program funded 
by U.S. Department of Energy, Office of Science, Advanced Scientific Computing 
Research (and Basic Energy Sciences), and by the Center for Applied 
Mathematics for Energy Research Applications (CAMERA)
under award number DE-SC0008666, 
}
\and
  Yaxiang Yuan\thanks{State Key Laboratory of Scientific and Engineering Computing, Academy of Mathematics and Systems Science, Chinese Academy of Sciences,
China (yyx@lsec.cc.ac.cn). Research supported in part by NSFC grants 11331012 and 11461161005.}
}
\begin{document}
\maketitle

\begin{abstract}
The ground state energy of a many-electron system can be approximated by an variational approach in which the total energy of the system is minimized with respect to one and two-body reduced density matrices (RDM) instead of many-electron wavefunctions. This problem can be formulated as a semidefinite programming problem. Due the large size of the problem, the well-known interior point method can only be used to tackle problems with a few atoms.  First-order methods such as the the alternating direction method of multipliers (ADMM) have much lower computational cost per iteration. However, their convergence can be slow, especially for obtaining highly accurate approximations.  In this paper, we present a practical and efficient second-order semi-smooth Newton type method for solving the SDP formulation of the energy minimization problem. We discuss a number of techniques that can be used to improve the computational efficiency of the method and achieve global convergence.  Extensive numerical experiments show that our approach is competitive to the state-of-the-art methods in terms of both accuracy and speed.
\end{abstract} 
\begin{keywords}
semidefinite programming, electronic structure calculation, two-body reduced density matrix, ADMM, semi-smooth Newton method.
\end{keywords}
\begin{AMS} 15A18,  65F15, 47J10, 90C22 \end{AMS}


\section{Introduction}
The molecular Schr\"odinger's equation, which is a many-body eigenvalue problem,
is a fundamental problem to solve in quantum chemistry. Because the 
eigenfunction to be determined is a function of $3N$ spatial variables,
where $N$ is the number of electrons in a molecule, a brute force approach
to solving this equation is prohibitively costly. The most commonly used
approaches to obtaining an approximate solution, such as the configuration
interaction~\cite{ci} and coupled cluster methods~\cite{cc}, express the approximate eigenfunction
as a linear or nonlinear combination of a set of 
many-body basis functions (i.e. Slater determinants), and determine the 
expansion coefficients 
by solving a projected eigenvalue problem or a set of nonlinear equations.
One has to choose the set of many-body basis functions judiciously to balance
the computational cost and the accuracy of the approximation. To 
reach chemical accuracy, the number of basis functions can still grow
rapidly with respect to $N$.

An alternative way to approximate the ground state energy (i.e., the smallest
eigenvalue), which does not involve approximating the many-body 
eigenfunction directly, is to reformulate the 
problem as a convex optimization problem and express the ground state energy 
in terms of the so called one-body reduced density matrix (1-RDM) and 
two-body reduced density matrix (2-RDM) that satisfy a number of linear 
constraints.
This convex optimization problem is a semidefinite program (SDP) that
can be solved by a number of numerical algorithms to be presented below.
This approach is often referred to as the variational 2-RDM (v2-RDM) or 
2-RDM method in short.

The development of the 2-RDM method dates back to 1950s. 
Mayer \cite{mayer1955electron} showed how the energy of a many-body 
problem can be represented in terms of 1-RDM and 2-RDM, which can be
write as a matrix and a 4-order tensor.
However, since not all matrices or tensors are RDMs associated with 
an $N$-electron wavefunction, one must add some constraints 
to guarantee that the matrices and tensors satisfy the so called 
\textit{N-representability condition}, which was first proposed by Coleman
\cite{A.J.Coleman1963} in 1963 and has been investigated for nearly 50 years. 
The N-representability condition for the 1-RDM in the variational problem 
has been solved in~\cite{A.J.Coleman1963}.  In 1964, Garrod and Percus~\cite{Garrod1964} 
showed a sufficient and necessary condition for the 2-RDM N-representability 
problem. It is theoretically meaningful but computationally intractable. 
In 2007, Liu et al. showed that the
N-representability problem of 2-RDM is QMA-complete~\cite{Liu2007}. Since then
a number of approximation conditions, including the P, Q, R, T1, T2, T2' 
conditions, have been proposed in \cite{A.J.Coleman1963,
Garrod1964, erdahl1978representability,  Zhao2004, mazziotti2006variational, braams2007t1}. All these conditions are formulated by keeping matrices whose elements are linear combinations of the components of 
the 1-RDM and 2-RDM matrices positive semidefinite. As a result, the constrained minimization of 
the total energy with respect to 1-RDM and 2-RDM becomes a SDP.

The practical use  of the v2-RDM approach to solving the 
ground state electronic structure is enabled, to some extent, by
the recent advances in numerical methods for solving large-scale SDPs.
In \cite{Nakata2001}, Nakata et al. solved the v2-RDM problem by an interior
point method.  Zhao et al. reformulated the 2-RDM using the 
dual SDP formalism and also applied the interior point method in
\cite{Zhao2004}. The problem size of the SDP formulation 
 in \cite{Zhao2004} is usually smaller than
the ones given in \cite{Nakata2001}.  
Rigorous error bounds for approximate solutions obtained from the v2-RDM 
approach are discussed in \cite{Chaykin2016}. 
Since the computational cost of the interior point method is typically high, this approach has only been 
successfully used for a handful of small molecules with a few atoms.
First order methods, which have much lower complexity per iteration,
have gained wide acceptance in recent years. The well-known alternating
 direction multiplier method (ADMM) has been used to solve
general SDPs in~\cite{Wen2010a}. It is the basis of the boundary point method
developed by Mazziotti to solve the v2-RDM in~\cite{Mazziotti2011}. 
Although ADMM has relatively low complexity per iteration,
it may converge slowly and take thousands or tens of thousands iterations 
to reach high accuracy. Recently, some new methods have been developed
to speed up the solution of general SDPs.
An example is the Newton-CG Augmented Lagrangian Method for SDP (SDPNAL) 
proposed in~\cite{Zhao2009}. 
An enhanced version of SDPNAL called SDPNAL+ is developed 
in~\cite{Yang2015}, which can efficiently treat nonnegative SDP matrices.
However, these methods have not been applied to the v2-RDM 
approach for electronic structure calculation.

In this paper, we first review how the ADMM method is used to solve the 
SDP formulation of the v2-RDM given in~\cite{Zhao2004}
since it serves as the foundation of the second-order method to 
be introduced below. 
We point out a key observation that applying the ADMM to the dual SDP formulation
is equivalent to applying the Douglas Rachford splitting (DRS)~\cite{DR1956,LM1979,EB1992} method to the primal SDP formulation of the problem. The DRS method 
can be viewed as a fixed point iteration that yields a solution
of a system of semi-smooth and monotone nonlinear equations that
coincides with the solution of the corresponding SDP.
The generalized Jacobian of this system of nonlinear equations is positive 
semidefinite and bounded. It has a special structure that allows us 
to compute the Newton step in our semi-smooth Newton method 
for solving SDPs efficiently.  We apply the semi-smooth Newton method
to the v2-RDM formulation of the ground state energy minimization problem,
and use a hyperplane projection technique~\cite{SS1999} to guarantee the 
global convergence of the method due to the monotonicity of the system of 
nonlinear equations. Our method is different from SDPNAL \cite{Zhao2009} 
and SDPNAL+ \cite{Yang2015} which minimizes a sequence of augmented 
Lagrangian functions for the dual SDP by a semi-smooth Newton-CG method. 
To improve the computational efficiency for solving v2-RDM problem, 
we exploit the special structures of matrices resulting from the 1-RDM and 
2-RDM constraints.  The block diagonal and low rank structures of these 
matrices are related to spin and spatial symmetry of the molecular 
orbitals~\cite{Gidofalvi2005,Nakata2001, Zhao2004}.  We show how 
they can be used to significantly reduce the computational costs in the 
semi-smooth Newton method.  Finally, we implement our codes based on the key implementation 
details and subroutines of SDPNAL \cite{Zhao2009}, SDPNAL+ \cite{Chaykin2016} and ADMM+ \cite{MR3342702}.
Extensive numerical experiments on examples taken from~\cite{Nakata2008} to demonstrate that 
our semi-smooth algorithm can indeed achieve higher accuracy than the ADMM 
method. We also show that it is competitive with SDPNAL and SDPNAL+ in terms of 
both computational time and accuracy.

The rest of this paper is organized as follows. In section \ref{sec:model},
we provide some background on electronic structure calculation, 
establish the notation and introduce the v2-RDM formulation. 
In section \ref{sec:admm},  we review first order methods suitable for
solving the SDP problem arising in the v2-RDM formulation. In particular, we 
examine the relationship between the ADMM and the DRS. We present a 
semi-smooth Newton method for solving the v2-RDM in section \ref{sec:nt}.  
Numerical results 
are reported in section \ref{sec:num}. Finally, we conclude the paper in section
\ref{sec:con}. 


\section{Background} \label{sec:model}
\subsection{The variational 2-RDM formulation of the electronic structure problem}
The electronic structure of a molecule can be determined by the solution to 
an $N$-electron Schr\"odinger equation
\be\label{eq:sch}
H \Psi = E \Psi,
\ee
where $\Psi: \R^{3N} \otimes \{\pm \frac{1}{2}\}^{3N} \rightarrow \C$ is a
N-electron antisymmetric wave function that obeys the Pauli exclusion 
principle, $E$ represent the total energy of the $N$-electron system, and
$H$ is the molecular Hamiltonian operator defined by
\be\label{eq:Hsch}
H = \underbrace{\sum_{i=1}^N -\frac{1}{2}\triangle_i - \sum_{i=1}^N \sum_{k = 1}^K \frac{Z_k}{|R_k-r_i|}}_{\text{one-body term}} + \underbrace{\frac{1}{2}\sum_{i,j = 1,\\ i\neq j}^N \frac{1}{|r_i - r_j|}}_{\text{two-body term}}.
\ee
Here $\triangle_i$ denotes a Laplace operator with respect to the 
spatial coordinate of the $i$-th electrons, $R_k, k = 1,\cdots,K$,
gives the coordinates of the $k$-th nuclei with charge $Z_k$, and 
$r_i, i = 1,\cdots,N$, gives the coordinates of the $i$-th electron. 

To simplify notation, let us ignore the spin degree of freedom.  
In this case, the wave function $\Psi$ belongs to the the Hilbert 
space $L_2((\R^3)^N)$ endowed with the inner product
\[
\iprod{\Psi_1}{\Psi_2} =  \sum_{s=\pm\frac{1}{2}} \int_{\R^{3N}} \overline{\Psi_1(r_1,\cdots,r_N)}\Psi_2(r_1,\cdots,r_N)dr_1\cdots dr_N.
\]
The smallest eigenvalue of $H$, often denoted by $E_0$, is called the 
ground state energy \eqref{eq:sch}. 

Solving \eqref{eq:sch} directly is not computationally feasible except 
for $N=1$ or $N=2$. A commonly used approach in quantum chemistry is 
to approximate $\Psi$ from a \textit{configuration interaction} subspace
spanned by a set of many-body basis function $\Phi_i$, often chosen to be
\textit{Slater determinants} of the form
\begin{equation} 
  \Phi_i(r_1, r_2, \hdots, r_{N}) = \frac{1}{\sqrt{N!}} \begin{vmatrix} \mol{i_1}(r_1) & \mol{i_2}(r_1) & \hdots & \mol{i_{N}}(r_1) \\ \mol{i_1}(r_2) & \mol{i_2}(r_2) & \hdots & \mol{i_{N}}(r_2) \\ \vdots & \vdots &  & \vdots \\ \mol{i_1}(r_{N}) & \mol{i_2}(r_{N}) & \hdots & \mol{i_{N}}(r_{N})\end{vmatrix}, 
\label{eq:slater}
\end{equation}
where $\{\mol{i}(r\}$ is a set of orthonormal basis functions known as
molecular \textit{orbitals}~\cite{SaaCheSho1003}. These orbitals can be obtained by 
substituting \eqref{eq:slater} into \eqref{eq:sch} and solving a nonlinear
eigenvalue problem known as the Hartree-Fock (HF) equation. 
The $N$ eigenfunctions associated with the smallest $N$ eigenvalues are known 
as the occupied HF orbitals. All other eigenfunctions are called 
\textit{unoccupied} or \textit{virtual} orbitals. The Slater determinant that 
consists of the $N$ occupied HF orbitals is called the HF Slater determinant,
and denoted by $\Phi_0$.

A new Slater determinant can be generated from an existing
Slater determinant by replacing one or more orbitals 
with others.  This process is often conveniently expressed 
through the use of creation and annihilation operators denoted by 
$\bba_i^{+}$ and $\bba_i$ respectively~\cite{Szalay2015}. 
The successive applications of different combinations of creation and 
annihilation operators to the HF Slater determinant that
replace occupied orbitals with unoccupied orbitals allow us to
generate a set of Slater determinants that can be used to expand an 
approximate solution to~(\ref{eq:sch}). The entire set of 
such Slater determinants defines the so called \textit{full configuration
interaction} (FCI) space.  The FCI approximation to the solution of 
\eqref{eq:sch} is often used as the baseline for assessing the 
accuracy of approximate solutions to \eqref{eq:sch}. The size 
of the FCI space depends on the number of electrons $N$ and 
the number of degrees of freedom ($d$) used to discretize each
orbital $\phi_i$ (i.e., the basis set size in the quantum
chemistry language). When $N$ is large and an accurate basis set is 
used to discretize $\phi_i$, the FCI space can be extremely large.
Hence, FCI calculation can only be performed for small molecules 
in a small basis set.

The matrix representation of the many-body Hamiltonian \eqref{eq:Hsch} 
in the space of Slater determinants is determined by one electron integrals 
\[
T_{i,j} = \int_{\R^3} \overline{\phi_i(r)}(-\frac{1}{2}\triangle-\sum_{c=1}^K \frac{Z_c}{|r-R_c|}) \phi_j(r)dr,
\]
and two electron integrals 
\[
V_{ij,kl} = \frac{1}{2}\int_{\R^3} \int_{\R^3} \overline{\phi_i(r)\phi_j(r')}\frac{1}{|r'-r|}\phi_k(r)\phi_l(r')drdr'.
\]
These integrals can be used to express the many-body Hamiltonian 
\eqref{eq:Hsch} using the so called second quantization notation:
\be\label{eq:hamsec0}
\mathbf{H} = \sum_{i,j}^d T_{i,j}\bba_i^+\bba_j + \sum_{i,j,k,l=1}^d V_{ij,kl} \bba_i^+  \bba_j^+ \bba_l \bba_k.
\ee

It is well known that the smallest eigenvalue of $\bbH$ can be obtained
from  the Rayleigh-Ritz variational principle via the solution of the 
following constrained minimization problem:
\be
\label{eq:minE}
E_0 = \min \iprod{\Psi}{ \bbH \Psi} \st \iprod{\Psi}{\Psi} = 1.
\ee
 When $\Psi$ is expanded in terms of Slater determinants, substituting
\eqref{eq:hamsec0} into \eqref{eq:minE} yields
\bea
E &=& 
\iprod{\Psi}{ \bbH \Psi} 
 = \sum_{i,j}^d T_{i,j}\gamma_{i,j} + \sum_{i,j,k,l=1}^d V_{ij,kl}\Gamma_{ij,kl}, \label{eq:erdm}
\eea
where 
\be
\gamma_{i,j} = \iprod{\Psi} {\bba_i^+\bba_j \Psi}
\ \ \mbox{and} \ \
\Gamma_{ij,kl} = \iprod{\Psi}{\bba_i^+  \bba_j^+ \bba_l \bba_k \Psi}
\label{eq:rdms}
\ee
are elements of the so-called one-body reduced density matrix (1-RDM) $\gamma$ 
and two-body reduced density matrix (2-RDM) $\Gamma$, respectively.  

Note that the dimensions of $\gamma$ and $\Gamma$ are $d\times d$ and 
$d^2\times d^2$ respectively, where $d$ is proportional to the number
of electrons $N$. By treating 
the total energy $E$ as a function of $\gamma$ and $\Gamma$, we can 
obtain an approximation to the ground state energy by solving an optimization
problem with $O(N^4)$ variables instead of an eigenvalue problem of 
a dimension that grows exponentially with respect to $N$.

However, $\gamma$ and $\Gamma$ are not arbitrary matrices. They 
are said to be $N$-representible if they can be written as \eqref{eq:rdms}
for some many-body wavefunction $\Psi$. $N$-representible matrices 
are known to have a number of properties~\cite{mazziotti2006variational,Zhao2004} that can be used to
constrain the set of matrices over which the objective function 
\eqref{eq:erdm} is minimized. These properties include
\bea
\gamma_{i,j} = \gamma_{j,i}, \Gamma_{ij,kl} &=&
  \Gamma_{kl,ij}; \quad \quad \quad\quad \mbox{Hermitian} \label{eq:rdmherm}
\\
\Gamma_{ij,kl} = -\Gamma_{ji,kl} &=& -\Gamma_{ij,lk}; \label{eq:rdmanti}
 \quad \quad \quad \; \mbox{anti-symmetric }\\
\tr(\gamma) = N \ \ \mbox{and} \ \  \tr(\Gamma) &=& \frac{N(N-1)}{2};\label{eq:rdmtrace}
 \quad \; \; \mbox{trace } \\
  \sum_{k}\Gamma_{ik,jk} &=& \frac{N-1}{2}\gamma_{ij}. \quad \quad \mbox{ partial
  trace}\label{eq:rdmtrace2} 
\eea

However, the above conditions are not sufficient to guarantee 
$\gamma$ and $\Gamma$  to be $N$-representible. A significant amount of
effort has been devoted in the last few decades to develop
additional conditions that further constrain $\gamma$ and $\Gamma$
to be $N$-representible~\cite{mazziotti2006variational,Zhao2004} without making use of 
$\Psi$ explicitly. These conditions are 
collectively called the \textit{N-representability conditions}. 

\subsection{N-representability conditions}
 The N-representability conditions were first introduced in
 \cite{A.J.Coleman1963}. 
It has been shown in~\cite{A.J.Coleman1963} that $\gamma$ is 
N-representable if and only if $0 \preceq \gamma \preceq I$.
For 2-RDM, it is more difficult to 
write down a complete set of the conditions under which $\Gamma$ is 
$N$-representable.  Liu et al. showed that the N-representability problem is QMA-complete 
in~\cite{Liu2007}. There has been efforts to derive 
 approximation conditions that are useful in practice. 
The well known approximation conditions in \cite{A.J.Coleman1963, Garrod1964,
 erdahl1978representability,  Zhao2004, mazziotti2006variational, braams2007t1}
 define the so-called  $P, Q,
 R, T1, T2$ variables whose elements can be expressed as a linear function
 with respect to the elements of $\gamma$ and $\Gamma$ as follows: 
 \bea  \label{eq:p}
 P_{ij,i'j'} &=&  \iprod{\Psi} {\bba_i^+  \bba_j^+ \bba_j' \bba_i' \Psi} = \Gamma_{ij,i'j'}, \\
  Q_{ij,i'j'} &=& \iprod{\Psi} {\bba_i  \bba_j \bba_{j'}^+ \bba_{i'}^+ \Psi} 
    = (\delta_{ii'}\delta_{jj'} - \delta_{ij'}\delta_{ji'}) 
     - (\delta_{ii'}\gamma_{jj'}+\delta_{jj'}\gamma_{ii'}) \\ \nonumber
     &+& (\delta_{ij'}\gamma_{ji'}+\delta_{ji'}\gamma_{ij'}) + \Gamma_{ij,i'j'},\\
  G_{ij,i'j'} &=& \iprod{\Psi} {\bba_i^+  \bba_j \bba_{j'}^+ \bba_{i'} \Psi} = \delta_{jj'}\gamma_{ii'}-\Gamma_{ij',i'j} \\ {T1}_{ijk,i'j'k'} 
  &=& \iprod{\Psi} {(\bba_i^+  \bba_j^+ \bba_{k}^+ \bba_{k'} \bba_{j'} \bba_{i'}
  + \bba_i  \bba_j \bba_{k} \bba_{k'}^+ \bba_{j'}^+ \bba_{i'}^+) \Psi},\nonumber \\ 
    &=& \mathcal{A}[ijk]\mathcal{A}[i'j'k'](\frac{1}{6}\delta_{ii'}\delta_{jj'}\delta_{kk'} - \frac{1}{2}\delta_{ii'}\delta_{jj'}\gamma_{k,k'}+\frac{1}{4}\delta_{ii'}\Gamma_{jk,j'k'}),\\
 {T2}_{ijk,i'j'k'} &=& \iprod{\Psi} {(\bba_i^+  \bba_j^+ \bba_{k} \bba_{k'}^+
 \bba_{j'} \bba_{i'} + \bba_i^+  \bba_j \bba_{k} \bba_{k'}^+ \bba_{j'}^+
 \bba_{i'}) \Psi}\nonumber \\ 
&=& \mathcal{A}[jk]\mathcal{A}[j'k'](\frac{1}{2}\delta_{jj'}\delta_{k,k'}\gamma_{ii'} + \frac{1}{4}\delta_{ii'}\Gamma_{j'k',jk}-
    \delta_{jj'}\Gamma_{ik',i'k}), \label{eq:t2}
\eea
where  $\delta$ is the Kronecker delta symbol and $\cA[ijk]f(i,j,k) =
f(i,j,k)+f(j,k,i)+f(k,i,j)-f(i,k,j)-f(j,i,k)-f(k,j,i)$. The $T2$ variable can be
strengthened to yield the $T2'$ variable described
in~\cite{braams2007t1,mazziotti2006variational}. We should point out that each of
\eqref{eq:p}-\eqref{eq:t2} is in fact a set of equations enumerating all
possible indices $i,j,k,i',j'$ and $k'$.
 Since $\Gamma$ is a  $4$-dimensional tensor satisfying \eqref{eq:rdmherm}
 and \eqref{eq:rdmanti}, one
 can convert it to a
 two-dimensional matrix $\tilde \Gamma$, i.e., 
 \[\Gamma_{ij,i'j'}=\tilde \Gamma_{j - i + (2d - i)(i - 1)/2, j' - i' + (2d -
 i')(i' - 1)/2}.\]
 Similar properties hold for $Q$. Hence, $\Gamma$ and  $Q$ can be transformed into 
$\frac{d(d-1)}{2} \times \frac{d(d-1)}{2}$  matrices.
 Because \eqref{eq:rdmanti} is not satisfied on the $4$-dimensional tensor $G$,
 it can only be transformed into a $d^2\times d^2$  matrix.
 By the anti-symmetric properties  of the $6$-dimensional tensors $T1$, $T2$ and
 $T2'$ \cite{Zhao2004, braams2007t1,mazziotti2006variational},
 they can be
transformed into 
$\frac{d(d-1)(d-2)}{6} \times \frac{d(d-1)(d-2)}{6}$, $\frac{d^2(d-1)}{2} \times
\frac{d^2(d-1)}{2}$ and $\frac{d^2(d-1)+2d}{2} \times \frac{d^2(d-1)+2d}{2}$
 matrices, respectively. For simplicity, we still use the
 notations $\Gamma, P, Q, G, T1, T2$ and $T2'$ to represent the matrices translated from these tensors.
 Finally, the corresponding N-representability condition
 of \eqref{eq:p}-\eqref{eq:t2} is to require each matrix to be positive semidefinite.  

\subsection{The SDP formulations} \label{sec:sdp-2rdm}
Let $b = (\svec(T),\svec(V))^T \in \R^m $ and \\$y = (\svec(\gamma),\svec(\Gamma))^T \in \R^m $ be vectorized integral and reduced density 
matrices that appear in \eqref{eq:erdm} respectively, where $\svec$ is used 
to turn a symmetric matrix $U$ into a vector according to  
\[
\svec(U) = (U_{11},\sqrt{2}U_{12},U_{22},\sqrt{2}U_{13},\sqrt{2}U_{23},U_{33},\cdots,U_{nn}).
\]
To simplify notations later, we rename matrices as $S_1 = \gamma$, $S_2 =
P$, $S_3 = Q$, $S_4=G$, $S_5=T1$ and $S_6=T2$, and treat both $y$ and $\{S_j\}$
as variables in the SDP formulation. Using the definition of $y$, we can rewrite the equation $S_1 = \gamma$
as a system of linear equations
 \be S_1 = \cA_1^* y + C_1,\label{eq:con-gamma}\ee
where  $\cA_1^* y = \sum_{p=1}^m A_{1p} y_p\in \R^{s_1 \times s_1}$ with  $A_{1p} \in \R^{s_1 \times s_1}$ and 
$C_{1} \in \R^{s_1 \times s_1}$. Obviously, $s_1=d$ and $C_1$ is a zero matrix.
 Similarly, each of \eqref{eq:p}-\eqref{eq:t2}
can be written succinctly as 
\be 
S_j = \cA_j^* y - C_j, \; j=2, \ldots,l=6, \label{eq:con-S}\ee
where $\cA_j^* y = \sum_{p=1}^m A_{jp} y_p$ with  $A_{jp} \in \R^{s_j \times s_j}$ and 
$C_{j} \in \R^{s_j \times s_j}$.  The integer $s_j$ is equal to the matrix
size of $S_j$. The matrices $A_{jp}$
are coefficients matrices of $y_p$ and $C_j$ are constant matrices
in the corresponding equation of \eqref{eq:p}-\eqref{eq:t2}. 

Using these notations, we can formulate the constrained minimization of
\eqref{eq:erdm} subject to $N$-representability conditions as a SDP:
\begin{equation}\label{eq:dualSDP}
	\begin{split}
        \min_{y, S_j} \ & b^Ty\\
		\st \ & S_j = \cA_j^* y - C_j, j = 1,\cdots,l,\\
		 & B^Ty = c, \\
		  & 0 \preceq S_1 \preceq I, \\
		 & S_j \succeq 0, j = 2,\cdots,l, \\
	\end{split}
\end{equation}
where the linear constraints $B^Ty = c$ follows from the  conditions 
\eqref{eq:rdmtrace}-\eqref{eq:rdmtrace2} and other equality conditions introduced in~\cite{Zhao2004}. 
If some of conditions in \eqref{eq:p}-\eqref{eq:t2} are not considered,  then \eqref{eq:dualSDP} can be adjusted accordingly.  
 If the condition on $T2$ is replaced by
that of $T2'$, then we set 
$S_6=T2'$. 

The SDP problem given in \eqref{eq:dualSDP} is often known as the
dual formulation.  The corresponding primal SDP of \eqref{eq:dualSDP} is 
\begin{equation}\label{eq:primalSDP}
	\begin{split}
        \max_{X_j,U} \ & \sum_{j=1}^l\iprod{C_j}{X_j} + \iprod{c}{x} - \iprod{C_1 + I}{U}\\
		\st \ & \sum_{j=1}^l\cA_j (X_j) + Bx - \cA_1(U) = b, \\
		& X_j \succeq 0, j = 1, \cdots, l, \\
		& U \succeq 0,
	\end{split}
\end{equation}
where $X_j \in \R^{s_j \times s_j}$, $U \in \R^{s_1 \times s_1}$, $\cA_j$ is the
conjugated operator of $\cA_j^*$ and $\cA_j(X) = (\iprod{A_{j1}}{X}, \cdots, \iprod{A_{jm}}{X} )^T$ for any matrix $X \in \R^{s_j \times s_j}$.

Since the largest matrix dimension of $X_j$ and $S_j$ is of order $O(d^3)$ and
$m=O(d^4)$, problems \eqref{eq:dualSDP} and \eqref{eq:primalSDP} are large scale
SDPs even for a moderate value $d$. However, the $S_j$ in \eqref{eq:dualSDP} are block diagonal
 matrices  due to the spatial and spin symmetries of  molecules. Hence, 
the computational cost for solving \eqref{eq:dualSDP} 
can be reduced by exploiting such block diagonal structures.  
In Table~\ref{tab:blk}, we list the number of diagonal blocks and their
dimensions resulting 
from spin symmetries  
in each of $\gamma$, $\Gamma, Q, G, T1, T2, T2'$ matrices.
\begin{table}[!htb]
\caption{the matrix dimensions of the block diagonal structures}\label{tab:blk}
\begin{center}
\begin{tabular}{cc}
\hline
$S_j$ matrix & block dimension \\
	\hline
$\gamma$ & $\frac{d}{2}$, 2 blocks; \\
$P$, $Q$, $\Gamma$ & $\frac{d^2}{4}$, 1 blocks; $\frac{d}{4}(\frac{d}{2}-1)$, 2 blocks; \\
	$G $ & $\frac{d^2}{2}$, 1 blocks; $\frac{d^2}{4}$, 2 blocks;  \\
	$T1$ & $\frac{d^2}{8}(\frac{d}{2}-1)$, 2 blocks; $\frac{d^2}{12}(\frac{d}{2}-1)(\frac{d}{2}-2)$, 2 blocks;  \\
	$T2$ & $\frac{d^2}{8}(\frac{3d}{2}-1)$, 2 blocks; $\frac{d^2}{8}(\frac{d}{2}-1)$, 2 blocks;   \\
	$T2'$ & $\frac{d}{2}+\frac{d^2}{8}(\frac{3d}{2}-1)$, 2 blocks; $\frac{d^2}{8}(\frac{d}{2}-1)$, 2 blocks;   \\
	\hline
\end{tabular}
\end{center}
\end{table}

Spatial symmetry may lead to additional block diagonal structures
within each spin diagonal block listed in Table~\ref{tab:blk}.  
These block diagonal structures can be clearly seen within the 
largest spin block diagonal block of the $T_2$ matrices
associated with the carbon atom and the CH molecules shown in 
Figure~\ref{fig:sstr}. These $T_2$ matrices are generated from 
spin orbitals obtained from the solution of the HF equation discretized
by a double-$\zeta$ local atomic orbital basis.  The block diagonal
structure shown in Figure~\ref{fig:sstr} is obtained by applying 
a suitable symmetric permutation to the rows and columns of the 
$T_2$ matrices. 
 By representing the variables $S_j$ as block diagonal
 matrices whose sizes are much smaller, the off-diagonal parts of $S_j$ are no
 longer needed. Consequently, the length of $y$ may be reduced and 
 each of \eqref{eq:con-gamma}-\eqref{eq:con-S} may be split into several smaller
 systems.  Therefore, it is possible to generate a much
 smaller SDP. Without loss of generality, we still consider the formulation
 \eqref{eq:dualSDP} and our proposed algorithm can be applied to the reduced
 problems as well.
\begin{figure}[!htb]
\centering
\subfigure[C atom]{
\includegraphics[width=0.38\textwidth,height=0.4\textwidth]{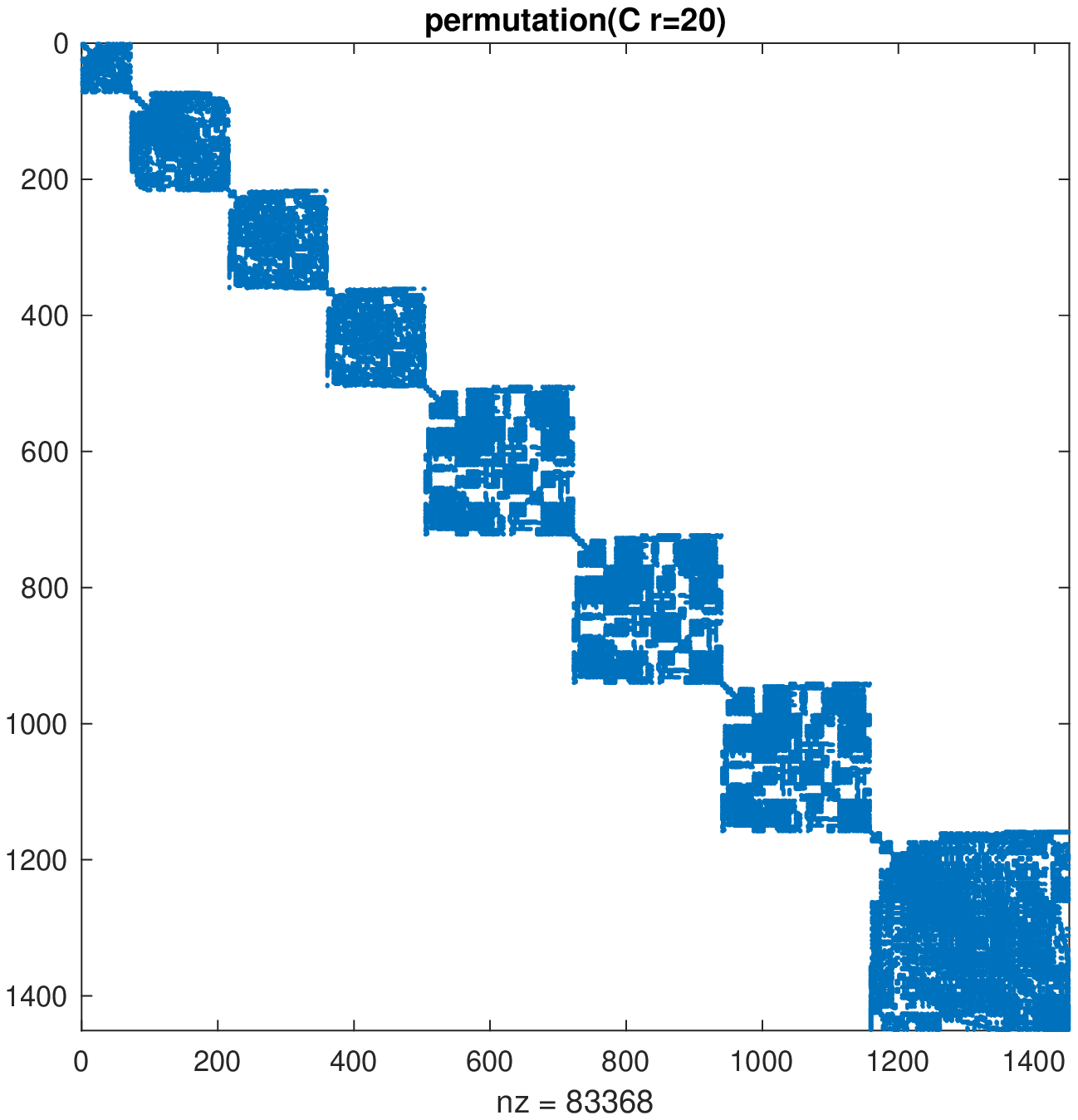}}
\subfigure[CH molecule]{
\includegraphics[width=0.53\textwidth,height=0.4\textwidth]{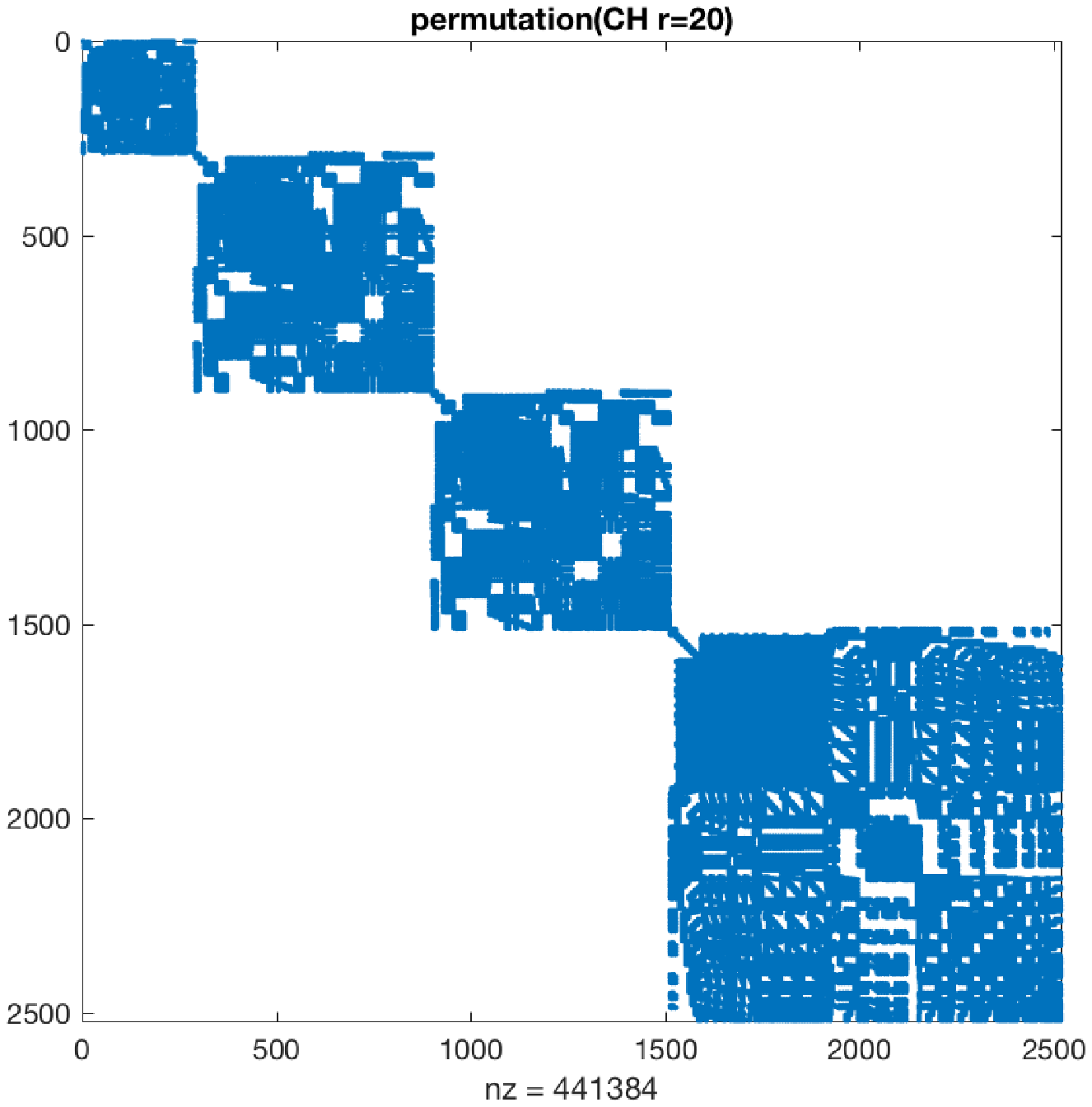}}
\caption{The block diagonal structures within the largest spin blocks of the 
$T_2$ matrices associated with the carbon (C) atom and the CH molecule.}\label{fig:sstr}
\end{figure}

In addition to exploiting the block diagonal structure in the 
$S_j$ matrices that appear in the dual SDP, we can also use the low rank 
structure of $\{X_i\}$ and $U$ to reduce the cost for solving 
\eqref{eq:primalSDP}.
The following theorem shows that $\{X_i\}$, $i=1,2,...,l$ and $U$ 
in the primal \eqref{eq:primalSDP} are indeed low rank as long as 
$d$ is sufficiently large.
\begin{theorem}\label{thm:lowrank}
  Assume that there exists matrices $\hat X_j\succ 0$ and
  $\hat U\succ 0$ such that
  the linear equality constraints of \eqref{eq:primalSDP} are satisfied 
  with them and the basis size $d$ is larger than $3$. 
Then there exists an optimal solution $\{X_1,\ldots,X_l,U\}$ of \eqref{eq:primalSDP} such that $r=\sum_{j=1}^l r_j + r_u\le \frac{\sqrt{3}}{8}(d^2+6)$, 
where $r_j$ is the rank of $X_j$ and $r_u$ is the rank of
$U$.  Moreover, $r_j / s_j = O(1/d)$ for $j$'s associated with
the T1, T2 and T2' conditions.
\end{theorem}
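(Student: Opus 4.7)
The plan is to reduce the statement to Pataki's classical bound on the ranks of extreme points of a spectrahedron. The assumption $\hat X_j\succ 0$, $\hat U\succ 0$ is Slater's condition for the primal \eqref{eq:primalSDP}, and the trace-type equalities encoded in $B^T y = c$ (cf.\ \eqref{eq:rdmtrace}--\eqref{eq:rdmtrace2}) bound $\tr(X_j)$ and $\tr(U)$ on the feasible set. Hence the primal maximum is attained over a non-empty compact convex set, and by a standard convex-analysis argument it is attained at an extreme point of the optimal face.

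Let $(X_1^*,\ldots,X_l^*,U^*,x^*)$ be such an extreme-point optimizer. Pataki's inequality applied to the block-diagonal SDP \eqref{eq:primalSDP} with $m$ scalar equality constraints yields
\begin{equation*}
  \sum_{j=1}^l \frac{r_j(r_j+1)}{2} + \frac{r_u(r_u+1)}{2} \le m',
\end{equation*}
where $m'$ denotes the effective number of equalities on the PSD part after eliminating the free variable $x$ (equivalently, $m$ minus $\mathrm{rank}(B)$). Using $r_i(r_i+1)\ge r_i^2$ gives $\sum_j r_j^2+r_u^2\le 2m'$, and Cauchy--Schwarz across the $l+1$ PSD blocks converts this into a linear bound
\begin{equation*}
  r^2 = \Big(\sum_{j=1}^l r_j+r_u\Big)^2 \le (l+1)\Big(\sum_{j=1}^l r_j^2+r_u^2\Big) \le 2(l+1)m'.
\end{equation*}
Substituting the explicit $m=\dim(y)$ obtained from the $\svec$ expressions for $\gamma$ and $\Gamma$, together with $l\le 6$ and a count of the linearly-independent equalities absorbed into $x$, is what should produce the constant $\sqrt{3}/8$ and the $d^2+6$ correction; the hypothesis $d>3$ enters exactly where these arithmetic inequalities become monotone.

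The second half of the conclusion then follows as an immediate corollary: every T1, T2, and T2' diagonal block in Table~\ref{tab:blk} has $s_j = \Omega(d^3)$, while the total-rank bound just established forces $r_j\le r = O(d^2)$, so $r_j/s_j = O(1/d)$.

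The step I expect to be the main obstacle is matching the clean constant $\frac{\sqrt{3}}{8}(d^2+6)$ out of the generic Cauchy--Schwarz estimate $r\le\sqrt{2(l+1)m'}$. Doing so requires either a careful pruning of the redundant $N$-representability equations to evaluate $m'$ exactly, or a sharpened convexity argument that exploits the disparity between block sizes---in particular, $S_1=\gamma$ is only $d\times d$ and therefore caps $r_1$ far below the generic $\sqrt{2m'}$ level. A secondary subtlety will be making the attainment argument fully rigorous, i.e.\ verifying that the trace equalities induce enough coercivity to compactify each PSD block, so that an extreme-point optimum exists and Pataki's inequality can legitimately be applied.
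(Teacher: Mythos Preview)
Your strategy---Pataki's rank bound plus an explicit count of $m=\dim(y)$---is exactly the paper's, and your treatment of the second assertion ($r_j/s_j = O(1/d)$ from $s_j = \Theta(d^3)$ and $r_j \le r = O(d^2)$) matches the paper word for word. The only divergence is in how Pataki is invoked, and this is precisely the obstacle you flagged. You apply Pataki blockwise, obtaining $\sum_j r_j(r_j+1)/2 + r_u(r_u+1)/2 \le m'$, and then pass to the total rank through Cauchy--Schwarz at the cost of a factor $\sqrt{l+1}$. The paper instead assembles $U, X_1,\ldots,X_l$ and the free variable $x$ (embedded as a diagonal block) into a single block-diagonal matrix, writes the primal in the standard form \eqref{eq:primSDP-s}, and invokes \cite{pataki1998rank} once on the total rank $\tilde r \ge r$, asserting $\tilde r(\tilde r+1)/2 \le m$. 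The constant then drops out with no further work: the paper computes $m = \tfrac{3}{64}d^4 - \tfrac{1}{16}d^3 + \tfrac{9}{16}d^2 + \tfrac{1}{4}d$ from the block sizes of $\gamma$ and $\Gamma$ in Table~\ref{tab:blk} and verifies the elementary polynomial inequality $m \le \bigl(\tfrac{\sqrt{3}}{8}(d^2+6)\bigr)^2$ for $d\ge 3$. No pruning of redundant equalities, no block-size-disparity argument, and no separate attainment discussion is offered.

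A caveat worth recording: the blockwise inequality you wrote is in fact the sharp form of Pataki's extreme-point bound for a product of PSD cones, and in general it does \emph{not} imply the single-block inequality $\tilde r(\tilde r+1)/2 \le m$ with $\tilde r = \sum_j r_j + r_u$ that the paper asserts (two decoupled blocks, each pinned to rank~$1$ by a single trace constraint, give $m=2$, $\tilde r = 2$, $\tilde r(\tilde r+1)/2 = 3$). So the paper's derivation of the exact constant $\tfrac{\sqrt{3}}{8}$ is itself somewhat casual, while your Cauchy--Schwarz route is rigorous but overshoots by roughly $\sqrt{l+1}$. Either accepting the paper's looser reading of Pataki or relaxing the constant in the statement closes the gap.
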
 
\begin{proof}
We first prove that there must exist a solution such that $r \le \sqrt{m}$,
where $m$ is the length of the dual variable $y$ in \eqref{eq:dualSDP}. The
primal SDP \eqref{eq:primalSDP} can be written as a standard SDP in the
form of \eqref{eq:primSDP-s},
where $X$ is a block diagonal matrix whose diagonal parts are $U$, $X_j$ and $\diag(x)$. Then the size of $X$
is $\sum_{j=1}^ls_j + s_u + 2s$. Let the rank of $X$ be $\tilde{r}$. 
It follows from the results shown in~\cite{pataki1998rank} that
$\frac{\tilde{r}(\tilde{r}+1)}{2} \leq m$, which implies $\sum_{j=1}^ls_j + s_u
+ 2s \leq \sqrt{m}$.  Since
$m=\frac{3}{64}d^4-\frac{1}{16}d^3+\frac{9}{16}d^2+\frac{1}{4}d \le
(\frac{\sqrt{3}}{8}(d^2+6))^2$ when $d \geq 3$, the first statement holds. The
second statement follows from  Table \ref{tab:blk} that the dimension of the 
$S_j$ matrices associated with the T1, T2, T2' conditions are on 
the order of $O(d^3)$. 
\end{proof}



\section{The ADMM and DRS method} \label{sec:admm}
We now discuss using first-order methods to solve the SDP formulations of 
the ground state energy minimization problem for a many-electron system.
%
%
For simplicity, let us first consider a generic SDP problem. 
Given $C,X \in \R^{n \times n}$, we define the linear operator 
$\mathcal{A}:\R^{n \times n} \rightarrow \R^m$ by
$
\cA X = (\iprod{A_1}{X},\cdots,\iprod{A_m}{X})^T
$
 where $A_1, \cdots, A_m \in \R^{n \times n}$. The conjugate operator of
 $\mathcal{A}$ is defined by $\cA^* y = \sum_{p=1}^m A_{p} y_p$ for $y \in \R^m$.
Using these notation, we can formulate a primal SDP as
\be\label{eq:primSDP-s}
\begin{aligned}&\max_{X} &\iprod{C}{X} \\
	&\st &\mathcal{A}X = b, \\ 
	&& X \succeq 0.
    \end{aligned}
\ee
The corresponding dual SDP is
\begin{equation}\label{eq:dualSDP-s}
	\begin{split}
        \min_{y,S} \ & b^Ty\\
		\st \ & S = \cA^* y - C,\\
		 & S \succeq 0.\\
	\end{split}
\end{equation}

\subsection{The DRS method}
The DRS method, first introduced to solve nonlinear partial differential 
equations~\cite{DR1956,LM1979,EB1992}, can be used to solve the 
primal SDP.  To describe the DRS method, we first establish some notations 
and terminologies.
Given a convex function $f$ and a scalar $t>0$, the {proximal mapping} of $f$ is defined by
\be\label{eq:prox}
\prox_{tf}(X):=\argmin\limits_{U} f(U)+\frac{1}{2t}\|U-X\|_F^2.
\ee 
We also define an indicator function on a convex set $\Omega$ as
\[
1_{\Omega}(X):=\begin{cases} 0, &\mbox{ if } X \in \Omega, \\
  +\infty, &\mbox{ otherwise}.\end{cases}
\] 
To use the DRS method to solve \eqref{eq:primSDP-s}, we let 
\be f(X) = -\iprod{C}{X}  + 1_{\{\cA X =b\}}(X) \mbox{ and }
h(X)=1_{K}(X),\label{eq:def-fh}\ee
where $K = \{X: X \succeq 0\}$.
Then each iteration of the DRS procedure for solving \eqref{eq:primSDP-s} can 
be described by the following sequences of steps
\begin{equation}\label{eq:DRS-sd}
\begin{split}
&X^{k+1} = \prox_{th}(Z^k),\\
&U^{k+1} = \prox_{tf}(2X^{k+1}-Z^k),\\
&Z^{k+1} = Z^k+U^{k+1}-X^{k+1},
\end{split}
\end{equation} 
where $\{U^{k}\}$ and $\{Z^{k}\}$ are two sets of auxiliary variables.
It follows from some simple algebraic rearrangements that the variables $X$ and 
$U$ can be eliminated in \eqref{eq:DRS-sd} to yield a fixed-point iteration 
of the form 
\be\label{eq:fix-DRS}
Z^{k+1}=T_{\textrm{DRS}}(Z^k),
\ee
where
\be\label{eq:f-DRS}
T_{\textrm{DRS}}:=I+\prox_{tf}\circ(2\prox_{th}-I)-\prox_{th}.
\ee

\subsection{ADMM}
The ADMM is another method for solving the dual formulation of the
SDP ~\eqref{eq:dualSDP-s}.
Let $X$ be the Lagrangian multiplier associated with the linear equality
constraints of \eqref{eq:dualSDP-s}. The augmented Lagrangian function is
\[ L_{\mu}(y,S,X) = b^T y + \iprod{X}{S-\mathcal{A}^*y + C}
+\frac{\mu}{2}||S-\mathcal{A}^*y+ C ||_F^2.\]
Applying ADMM \cite{Wen2010a} to \eqref{eq:dualSDP-s} yields the 
following sequence of steps in the $k$th iteration
\begin{equation}\label{eq:ADMM-sd}
	\begin{split}
		&y^{k+1} = {\arg\min}_y L_{\mu}(y,S^k;X^k), \\
		&S^{k+1}  = {\arg\min}_{S \succeq 0} L_{\mu}(y^{k+1},S;X^{k}), \\
		&X^{k+1}  = X^k + \mu (S^{k+1} - \cA^* y^{k+1} + C). \\
	\end{split}
\end{equation}

In practice, the penalty parameter $\mu$ is often updated adaptively to
achieve faster convergence in the ADMM. One strategy is to tune $\mu$ to balance the primal infeasibility $\eta_p$ and the dual infeasibility $\eta_d$ defined by
\be\label{eq:pdinf}
\eta_p = \frac{\|\cA(X)-b\|_2}{\max(1,\|b\|_2)} \quad \text{and} \quad \eta_d= \frac{\|\cA^*y-C-S\|_F}{\max(1,\|C\|_F)}.
\ee
If the mean of $\eta_p/\eta_q$ in a few steps is larger (or smaller) than a
constant $\delta$, we decrease (or increase) the penalty parameter $\mu$ by
a multiplicative factor $\gamma$ (or $1/\gamma$) with $0 < \gamma < 1$. 
To prevent $\mu$ from becoming excessively large or small, 
a upper and lower bound are often imposed on $\mu$. This strategy has been 
demonstrated to be effectively in \cite{Wen2010a}. 

\subsection{The connection between ADMM and DRS}
It is well known that the DRS for the primal \eqref{eq:primSDP-s} is equivalent
to the ADMM for dual \eqref{eq:dualSDP-s}. 
In particular, the $X$ variable produced in the $k$th step of DRS applied to
\eqref{eq:primSDP-s} is exactly the $X$ variable produced in the $k$th step 
of ADMM applied to \eqref{eq:dualSDP-s}. The other variables ($Z$ and $U$) and 
the parameter $t$ produced in DRS are related to the variables $y$, $S$ and parameter $\mu$ produced in the ADMM via
\bes{
t &= \mu; \\
Z^{k} & = X^k - t \mathcal{A}^*y^k; \\
U^{k} &= X^{k-1} + t(\mathcal{A}^*y^{k-1} + S^k- C).
}
If the DRS \eqref{eq:DRS-sd} is first executed, we can obtain the following relationship for the ADMM  as
\bes{\label{eq:drs-admm2}
    t &= \mu;\\
    X^{k} &= \prox_{th}(Z^{k-1}); \\
    S^k &= \frac{1}{t}(X^k - Z^k); \\
    \mathcal{A}^*y^{k} &= \frac{1}{t}(X^{k} - X^{k-1}) - S^k +C.
}
The variable $y^k$ can be further computed from the last equation if the operator $\mathcal{A}$ is of full
row rank. Consequently, the strategies of the ADMM for updating $\mu$ can be
used in the DRS for modifying $t$ and vice versa. However, one should be careful
on computing the primal and dual infeasibilities of the DRS when the parameter $t$ is changed from $t_1$ to $t_2$ after one loop of
the DRS \eqref{eq:DRS-sd}. In this case, the next immediate
 update of the DRS should be 
\begin{equation}\label{eq:DRS-sd2}
\begin{split}
&X^{k+1} = \prox_{t_1h}\left(Z^k\right),\\
&U^{k+1} = \prox_{t_2f}\left(X^{k+1}-\frac{t_2}{t_1}(Z^k-X^{k+1})\right), \\
&Z^{k+1} = \frac{t_2}{t_1}(Z^k-X^{k+1} )+ U^{k+1}. \\
\end{split}
\end{equation} 
Thereafter, the original iterations \eqref{eq:DRS-sd} can still be used for the
fixed $t_2$.

\subsection{Application to the 2-RDM}
The ADMM has been successfully used to solve the 2-RDM problem in  \cite{Mazziotti2011} where the method is refereed to as the boundary point method. 
To apply ADMM to solve ~\eqref{eq:dualSDP}, we first write the augmented 
Lagrangian function as
\begin{equation}
\begin{split}
	L(y,S_j;X_j,x) &= b^Ty + \sum_{j=1}^l\iprod{X_j}{S_j - \cA_j^* y + C_j} + \iprod{x}{c-B^Ty}  \\ 
	&+\frac{\mu}{2}(\sum_{j=1}^l \|S_j - \cA_j^* y + C_j\|_F^2 +
    \|c-B^Ty\|_2^2),
\end{split}
\end{equation} 
 where $X_j$ and $x$ are Lagrangian multipliers and $\mu>0$ is a
 penalty parameter.
The the $k$th iteration of ADMM consists of the following sequence of steps:
\begin{equation} \label{eq:ADMM-2rdm}
	\begin{split}
		&y^{k+1} = {\arg\min}_y L(y,S_j^k;X_j^k,x^k), \\
		&S_1^{k+1}  = {\arg\min}_{0 \preceq S_1 \preceq I} L(y^{k+1},S_j;X_j^{k},x^{k}), \\
		&S_j^{k+1}  = {\arg\min}_{S_j \succeq 0} L(y^{k+1},S_j;X_j^{k},x^{k}), \ j = 2,\cdots,l, \\
		&X_j^{k+1}  = X_j^k + \mu (S_j^{k+1} - \cA_j^* y^{k+1} + C_j),  \ j = 1,\cdots,l, \\
		&x^{k+1}= x^k + \mu(c^{k+1} - B^Ty^{k+1}).
	\end{split}
\end{equation} 

The convergence of ADMM has been studied in 
\cite{gabay1976dual, eckstein1992douglas, boyd2011distributed, chen2017note,
Wen2010a}. 
The following theorem, which establish the convergence of the ADMM 
to the solution of \eqref{eq:dualSDP} follows from the analysis 
given in \cite[Theorem 2.9]{Wen2010a}.
\begin{theorem} \label{thm:conv}
Suppose that the KKT points of \eqref{eq:dualSDP} exist.  Then the sequence of 
variables $(X_j^k,x^k, S_j^k,y^k)$ generated from the ADMM converge to a 
solution $(X_j^*,x^*, S_j^*,y^*)$ of \eqref{eq:dualSDP} from any starting 
point.  Furthermore, $\|\sum_{j=1}^l\iprod{X_j^k}{C_j}+c^Tx - b^Ty^k\|$, $\|\sum_{j=1}^l\cA_jX_j + Bx -b\|$, $\|\cA_j^*y^k-C-S_j^k\|_F$ and $\|B^Ty^k-c\|$ all converge to 0.
\end{theorem}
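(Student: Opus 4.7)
The plan is to reduce the v2-RDM problem \eqref{eq:dualSDP} to the standard dual SDP form \eqref{eq:dualSDP-s} and then invoke the convergence result of \cite[Theorem 2.9]{Wen2010a}. First I would stack the blocks $S_1,\ldots,S_l$ into a single block-diagonal variable $S=\mathrm{diag}(S_1,\ldots,S_l)$, rewrite the cone constraints as $S\in\cK$ where $\cK=\{Y:0\preceq Y\preceq I\}\times\cS_+^{s_2}\times\cdots\times\cS_+^{s_l}$ (a closed convex set whose projection factors over blocks and, on the first block, reduces to clipping eigenvalues into $[0,1]$), and concatenate the linear equations $S_j=\cA_j^* y-C_j$ together with $B^Ty=c$ into a single linear relation matching the structure of \eqref{eq:dualSDP-s}. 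The hypothesis that the KKT system of \eqref{eq:dualSDP} admits a solution is exactly the existence assumption required by the cited theorem.

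Second, I would exploit the ADMM--DRS equivalence established in Section \ref{sec:admm}: ADMM \eqref{eq:ADMM-2rdm} applied to the dual is equivalent to DRS \eqref{eq:DRS-sd} applied to the primal \eqref{eq:primalSDP} through the substitutions in \eqref{eq:drs-admm2}. The operator $T_{\mathrm{DRS}}$ defined in \eqref{eq:f-DRS} is firmly non-expansive, and its fixed points correspond one-to-one with KKT points of the primal-dual SDP pair. By the classical Krasnosel'skii--Mann convergence theorem for firmly non-expansive operators, $\{Z^k\}$ converges to some $Z^*$; the formulas \eqref{eq:drs-admm2} then yield $X_j^k\to X_j^*$, $x^k\to x^*$, $S_j^k\to S_j^*$, and $y^k\to y^*$, the last step using that the stacked linear operator associated with \eqref{eq:dualSDP} has full row rank (which may be assumed without loss of generality after removing redundant equality constraints).

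Third, for the four residual statements, I would pass to the limit in the KKT conditions of \eqref{eq:dualSDP} and its primal \eqref{eq:primalSDP}. At the limit point one has primal feasibility $\sum_{j=1}^l\cA_j X_j^* + Bx^* = b$, dual feasibility $\cA_j^* y^* - C_j = S_j^*$ and $B^T y^*=c$, and strong duality $\sum_{j=1}^l\iprod{X_j^*}{C_j}+c^T x^* = b^T y^*$. Since each of the four residual expressions in the theorem is a continuous function of the iterates that vanishes at this limit, convergence to zero follows by continuity of norms and inner products.

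The main obstacle I anticipate is handling the non-generic structure of \eqref{eq:dualSDP}: the bilateral bound $0\preceq S_1\preceq I$ is not a simple PSD cone, the auxiliary variable $x$ sits alongside $y$ in the dual, and multiple blocks $\{S_j\}$ appear simultaneously. These features require a careful re-grouping of the subproblems in \eqref{eq:ADMM-2rdm} into a two-block splitting whose second block is itself separable, so that the standard two-block ADMM framework of \cite{Wen2010a} applies. Once this reformulation is carried out, the convergence of the iterates and the vanishing of the four residuals are immediate.
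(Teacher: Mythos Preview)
Your proposal is correct and follows essentially the same approach as the paper: both reduce the claim to \cite[Theorem~2.9]{Wen2010a}. The paper in fact gives no proof at all beyond stating that the theorem ``follows from the analysis given in \cite[Theorem~2.9]{Wen2010a}'', so your careful reformulation---stacking the blocks, treating the bilateral constraint $0\preceq S_1\preceq I$ via a product cone, and grouping the equality constraints so that the two-block ADMM framework applies---supplies exactly the details the paper omits. Your additional route through the DRS equivalence and Krasnosel'skii--Mann is sound but not strictly needed once Wen's theorem is invoked directly.
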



\section{The Semi-smooth Newton method} \label{sec:nt}
Although Theorem~\ref{thm:conv} asserts that the ADMM (and consequently the 
DRS method due to its equivalence to the ADMM) converges from 
any starting point, the convergence can be slow, especially towards a
highly accurate approximation to the solution of \eqref{eq:dualSDP}.
In practice, we often observe a rapid reduction in the objective 
function, infeasibility and duality gap in the first few iterations. 
However, the reduction levels off after the first tens or hundreds 
of iterations.  To accelerate convergence and obtain a more 
accurate approximation, we consider a second-order method.

The DRS can be characterized as a
fixed-point iteration \eqref{eq:fix-DRS} for solving a system of nonlinear 
equations
\be \label{eq:fix1}
F(Z) = \prox_{th}(Z) - \prox_{tf}(2\prox_{th}(Z) - Z) = 0, 
\ee
where $Z \in \R^{n \times n}$. Moreover, the solution of \eqref{eq:fix1} is also
an optimal solution to
\eqref{eq:primSDP-s} and vice versa. Hence, we will focus on more efficient ways to
solve the equations \eqref{eq:fix1}. 
To simplify the derivation of the new method to be presented,
we first make the following assumption.
\begin{assumption} The operator $\cA$ in \eqref{eq:primSDP-s} satisfies
  $\cA\cA^*  =I$ and the Slater condition holds. That is, there exits
  $X\succ 0$ such that $\cA X=b$.
  \end{assumption}

The first part of the assumption implies that $\cA$ has full row rank. 
It is satisfied in many SDPs including \eqref{eq:dualSDP} after a suitable
transformation of $\cA$.

\subsection{Generalized Jacobian} 
Before we discuss how to solve \eqref{eq:fix1}, let us first examine the 
structure of the generalized Jacobian of $F(Z)$.
Using the definition of $f(x)$ and $h(x)$ given in \eqref{eq:def-fh},
we can write down the explicit forms of $\prox_{tf}(Y)$ and $\prox_{th}(Z)$ as 
\begin{eqnarray*}
\prox_{tf}(Y) &=& (Y + t C) - \cA^* (\cA Y+ t\cA C-b), \\
\prox_{th}(Z) &=& Q_\dagger \Sigma_+ Q_\dagger^T, 
\end{eqnarray*}
and
where
\[ Q \Sigma Q^T = \begin{pmatrix} Q_\dagger & Q_\ddagger
\end{pmatrix}\begin{pmatrix} \Sigma_+ & 0 \\ 0 & \Sigma_- \end{pmatrix}
    \begin{pmatrix} Q_\dagger^T \\ Q_\ddagger^T \end{pmatrix} \] is
        the spectral decomposition of the matrix $Z$, and the diagonal 
matrices $\Sigma_+$ and $\Sigma_-$ contain the nonnegative and negative eigenvalues of
$Z$.

Although $F$ is not differentiable, its generalized subdifferential still
exists. Since $F$ is locally Lipschitz continuous,  it  can be verified 
that
$F$ is almost differentiable everywhere. We next introduce the concepts of generalized subdifferential.
\begin{definition}\label{def:GD}
Let $F$ be locally Lipschitz continuous at $X\in\cO$, where $\cO$ is an open set. Let $D_F$ be the set of differentiable
points of $F$ in $\cO$. The \emph{B-subdifferential} of $F$ at $X$ is defined by
\[
\partial_BF(X):=\left\{\lim\limits_{k\rightarrow\infty}F'(X^k)| X^k\in D_F, X^k\rightarrow X\right\}.
\]
The set 
$\partial F(x)=\emph{co}(\partial_BF(x))$
is called Clarke's \emph{generalized Jacobian}, where $\emph{co}$ denotes the convex hull.
\end{definition}


It can be shown that the generalized Jacobian matrix associated with 
the second term of $F(Z)$ in \eqref{eq:fix1} 
has the form
\be 
\label{eq:def-D}
\cD \equiv \partial \prox_{tf}((2\prox_{th}(Z)-Z))
=  \cI- \cA^*\cA,
\ee
where $\cI$ is the identity operator.
Similar to the convention used in \cite{Zhao2009}, we define a generalized 
Jacobian operator $\cM(Z)\in \partial \prox_{th}(Z)$ 
in terms of its application to an $n$-by-$n$ matrix $S$ that yields
\be \label{eq:def-M}
\cM(Z)[S] = Q(\Omega \circ (Q^T S Q))Q^T,  \forall S \succeq 0,
\ee
where $Q\Sigma Q^T$ is the eigen-decomposition of $Z$ with
$\Sigma = \diag(\lambda_1,\cdots,\lambda_n)$, 
and 
\[
\Omega = 
\left[
\begin{matrix}
E_{\alpha\alpha} & k_{\alpha \bar{\alpha}} \\
k_{\alpha \bar{\alpha}}^T & 0  
\end{matrix}
\right],
\]
with $\alpha = \{i | \lambda_i > 0\}$, 
$\bar{\alpha} = \{1,\ldots,n\} \setminus \alpha$ and $E_{\alpha\alpha}$ being
a matrix of ones and $k_{ij} = \frac{\lambda_i}{\lambda_i - \lambda_j}, i \in \alpha, j \in \bar{\alpha}.$ The $\circ$ symbol appeared in \eqref{eq:def-M} 
denotes a Hadamard product. It follows from \eqref{eq:fix1}, \eqref{eq:def-D} and 
\eqref{eq:def-M} that the generalized Jacobian of $F(z)$ can be written as 
\be\label{eq:jacob}
\cJ(Z)= \cM(Z) + \cD(I-2\cM(Z)).
\ee

The function $F$ given in \eqref{eq:fix1} is strongly semi-smooth \cite{Mifflin1977, QS1993} and monotone, which is important for establishing
the positive semidefinite nature of its $B$-subdifferential. The precise 
definitions of these properties are given below.
\begin{definition}\label{def:semi}
Let $F$ be a locally Lipschitz continuous function in a domain $\cO$. We say
that $F$ is  semi-smooth at $x\in\cO$ if (i) $F$ is directionally differentiable at
$x$; (ii) for any $z\in\cO$ and $\cJ\in\partial F(x+z)$,
\be \label{eq:smsm}
\|F(x+z)-F(x)-\cJ[z]\|_2=o(\|z\|_2)\quad\mbox{as}\ z\rightarrow 0.
\ee
The function $F$ is said to be strongly  semi-smooth if $o(\|z\|_2)$ in \eqref{eq:smsm} is replaced by $O(\|z\|_2^2)$. It is called
\textit{monotone} if $ \iprod{x-y}{F(x)-F(y)}\geq 0,\ \mbox{for all}\
x,y\in\R^n$.
\end{definition}

The next lemma characterizes the fixed point map given in \eqref{eq:fix1} 
and its generalized Jacobian matrix.
\begin{lemma}\label{lemma:semimono}
The function $F$ in \eqref{eq:fix1} is strongly semi-smooth and monotone. Each element of B-subdifferential $\partial_B F(x)$ of $F$ is positive semidefinite.
\end{lemma}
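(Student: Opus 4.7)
The plan is to reduce all three claims to structural properties of the Douglas--Rachford splitting operator, since by the definitions in \eqref{eq:def-fh} and \eqref{eq:f-DRS} the map $F$ coincides with $I-T_{\mathrm{DRS}}$.

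\textbf{Strong semi-smoothness.} I would decompose $F$ using the sum and composition rules for semi-smooth functions. The projection $\prox_{th}$ onto the positive semidefinite cone is the L\"owner operator associated with $\max(\cdot,0)$ and is strongly semi-smooth by the classical result of Sun and Sun. Under the assumption $\cA\cA^*=I$, the map $\prox_{tf}$ has the explicit affine form $\prox_{tf}(Y)=Y+tC-\cA^*(\cA Y+t\cA C-b)$ already recorded in the excerpt, hence it is smooth. The inner piece $2\prox_{th}-I$ is strongly semi-smooth, and composing it with the affine map $\prox_{tf}$ preserves strong semi-smoothness. The sum rule then yields strong semi-smoothness of $F$.

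\textbf{Monotonicity.} I would invoke the standard identity $T_{\mathrm{DRS}}=\tfrac{1}{2}(I+R_{tf}R_{th})$, where $R_g:=2\prox_g-I$ is the reflection of a proper closed convex function $g$. Since reflections of convex functions are nonexpansive, the composition $R_{tf}R_{th}$ is nonexpansive, so $T_{\mathrm{DRS}}$ is firmly nonexpansive. A direct algebraic check shows that whenever $T$ is firmly nonexpansive, so is $I-T$; hence $F=I-T_{\mathrm{DRS}}$ is firmly nonexpansive, and in particular $\iprod{X-Y}{F(X)-F(Y)}\ge\|F(X)-F(Y)\|^2\ge 0$, which is the required monotonicity.

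\textbf{Positive semidefiniteness of $\partial_B F$.} I would promote firm nonexpansiveness to an infinitesimal statement at differentiability points. Given $Z\in D_F$ and an arbitrary direction $H$, substituting $X=Z+sH$ and $Y=Z$ in the firm nonexpansiveness inequality, dividing by $s^2$, and letting $s\to 0^+$ yields $\|F'(Z)[H]\|^2\le\iprod{H}{F'(Z)[H]}$, so in particular $\iprod{H}{F'(Z)[H]}\ge 0$. Since the cone of positive semidefinite operators is closed under pointwise limits, this inequality survives the limiting process in Definition \ref{def:GD}, so every element of $\partial_B F(Z)$ is positive semidefinite. The only technical care is in the semi-smooth chain rule, since $\prox_{th}$ is only semi-smooth and one must track the Jacobian of $\prox_{tf}\circ(2\prox_{th}-I)$ through $\partial\prox_{th}$; but because $\prox_{tf}$ is affine this chain rule goes through without invoking the more delicate constant-rank or regularity hypotheses.
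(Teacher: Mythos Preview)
Your proposal is correct and follows essentially the same route as the paper: strong semi-smoothness via closure under composition and sums (citing the Sun--Sun result for the PSD cone projection and noting $\prox_{tf}$ is affine), monotonicity via firm nonexpansiveness of $T_{\mathrm{DRS}}$, and positive semidefiniteness of $\partial_B F$ as a consequence of firm nonexpansiveness. The only notable difference is that for the last claim the paper simply cites an external lemma (Lemma~3.5 in \cite{xiao2016regularized}), whereas you supply the direct limiting argument from firm nonexpansiveness at differentiability points; your version is more self-contained but otherwise equivalent in spirit.
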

\begin{proof}
The strongly semi-smoothness of $F$ follows from the derivation given 
in~\cite{RW1998, SS2002} to establish the semi-smoothness of proximal mappings. In fact, the projection over a polyhedral set is
strongly semi-smooth \cite[Example 12.31]{RW1998}  and the projections over
symmetric cones are proved to be strongly semi-smooth in \cite{SS2002}. Hence,
$\prox_{tf}(\cdot)$ and $\prox_{th}(\cdot)$ are strongly semi-smooth. Since
strongly semi-smoothness is closed under scalar multiplication, summation and
composition, the function $F$ is strongly semi-smooth.

 It has been shown in \cite{LM1979} that the operator $T_{\textrm{DRS}}$ is
firmly nonexpansive. Therefore, $F$ is firmly nonexpansive, hence monotone
\cite[Proposition 4.2]{BC2011}.  The positive semidefiniteness simply follows
from Lemma 3.5 in~\cite{xiao2016regularized}.
\end{proof}
\subsection{Computing the Newton direction}
Using the expression given in \eqref{eq:jacob}, we can now discuss
how to compute the Newton direction efficiently.  At a given iterate $Z^k$, 
we compute a Newton direction $S^k$ by solving the equation
\be\label{eq:r-newton}
(\cJ_k+\mu_k\cI)[S^k]=-F^k,
\ee
where $\cJ_k\in\partial_BF(Z^k)$, $F^k=F(Z^k)$, $\mu_k=\lambda_k\|F^k\|_2$ and 
$\lambda_k>0$ is a regularization parameter.
The equation \eqref{eq:r-newton} is well-defined since each element of B-subdifferential $\partial_B F(x)$ of $F$ is positive
semidefinite and the regularization term $\mu_k I$ is chosen such that
$\cJ_k+\mu_k \cI$ is invertible. From a computational view, it is not practical to solve the linear system (\ref{eq:r-newton}) exactly. Therefore, we
seek an approximate step $S^k$ by solving (\ref{eq:r-newton}) approximately 
so that
\be\label{eq:residual}
\|r^k\|_F\leq\tau\min\{1,\lambda_k\|F^k\|_F \|S^k\|_F\},
\ee
where 
\be\label{eq:rk}
r^k:=(\cJ_k+\mu_k \cI)[S^k]+F^k
\ee
is the residual and $0<\tau<1$ is some positive constant. 

Since the $\cJ_k$ matrix in \eqref{eq:r-newton} is nonsymmetric, and
its dimension is large, we apply the binomial inverse theorem to transform 
\eqref{eq:r-newton} into a smaller symmetric
system. If we vectorize the matrix $S$, 
 then the operators $\cM(Z)$ and $\cD$ 
can be expressed as matrices
\[
M(Z) = \tilde{Q} \Lambda \tilde{Q}^T
\
\text{and} 
\
D = I - A^TA，
\]
respectively, where $\tilde{Q} = Q\otimes Q$, $\Lambda = \diag(\ivec(\Omega))$, $I$ is the identity matrix and $A$ is the matrix form of $\cA$. 
Let $W=I-2M(Z) = \tilde{Q}(I - 2\Lambda)\tilde{Q}^T$ and
$H=\tilde{Q}((\mu_k+1)I - \Lambda)\tilde{Q}^T$. Then the matrix form of $\cJ_k+\mu_k I $ can be written as $J_k + \mu_k I = H - A^TAW$. It follows from the 
binomial inverse theorem that
\beaa
(J_k+\mu_k I)^{-1} &=& (H - A^T A W)^{-1} \\
&=&  H^{-1} +  H^{-1} A^T (I- AW  H^{-1}A^T)^{-1} A WH^{-1}.
\eeaa
Define \be \label{eq:def-T} T = \tilde{Q}L\tilde{Q}^T, \ee
where $L$ is a diagonal matrix with diagonal entries $L_{ii}= \frac{\Lambda_{ii}
\mu_k}{\mu_k+1 -\Lambda_{ii}}$.
By using the identities $H^{-1} = \frac{1}{\mu_k+1}I + \frac{1}{\mu_k(\mu_k+1)}T $ and $WH^{-1} =
\frac{1}{1+\mu_k}I - (\frac{1}{\mu_k}+\frac{1}{\mu_k+1})T $, we can
further obtain 
\bea\label{eq:nleqn1}
&& (J_k+\mu_k I)^{-1}  \\ \nonumber
&=&  \frac{\mu_k I + T}{\mu_k(\mu_k+1)} \left(I+ A^T
 \left(\frac{\mu_k^2}{2\mu_k+1}I+A T A^T\right)^{-1} A (\frac{\mu_k}{2\mu_k+1}I -
T )\right).
\eea
As a result, the solution of \eqref{eq:r-newton} can be obtained by first solving
the following symmetric linear equation
\be\label{eq:nleqn2}
\left(\frac{\mu_k^2}{2\mu_k+1}I+ATA^T \right)d_s =  a,
\ee
where $a = -A (\frac{\mu_k}{2\mu_k+1}I - T)\svec(F^k)$, by
an iterative method such as the conjugate gradient (CG) method or 
the symmetric QMR method. Note that the size of the coefficient matrix of \eqref{eq:nleqn2} is $m \times m$
while that of \eqref{eq:r-newton} is $n^2 \times n^2$, where $m$ usually is
 much smaller than $n^2$. Then we use the following expression
to recover
\[
S^k =  \frac{1}{\mu_k(\mu_k+1)}(\mu_k \cI + \cT) [-F^k+ \cA^* d_s],
\] 
where $\cT$ is the operator form of $T$ in \eqref{eq:def-T}.
Specifically, applying $\cT $ to a matrix $S$ yields
\[
\cT(Z)[S] = Q(\Omega_0 \circ (Q^T S Q))Q^T,  \forall S \succeq 0,
\] 
where 
$ \Omega_0 = 
\left[
\begin{matrix}
E_{\alpha\alpha} & l_{\alpha \bar{\alpha}} \\
l_{\alpha \bar{\alpha}}^T & 0  
\end{matrix}
\right]$, 
and $l_{ij} = \frac{\mu_k k_{ij}}{\mu_k + 1 - k_{ij}}  $. 

Let $\Upsilon = \cT(Z)[S]$. We can then use 
the same techniques used in \cite{Zhao2009} to express $\Upsilon$ as
multiplication:
\be\label{eq:mv1}
\Upsilon = [Q_{\alpha} Q_{\bar{\alpha}}]
\left[\begin{matrix}
	Q_{\alpha}^TSQ_{\alpha} & l_{\alpha \bar{\alpha}} \circ Q_{\alpha}^TSQ_{\bar{\alpha}} \\
	l_{\alpha \bar{\alpha}}^T \circ Q_{\bar{\alpha}}^TSQ_{\alpha} & 0
\end{matrix}\right]	
\left[\begin{matrix}
Q_{\alpha}^T \\
Q_{\bar{\alpha}}^T
\end{matrix}\right]	= G + G^T,	
\ee
where $G = Q_{\alpha}(\frac{1}{2}(UQ_{\alpha}^T)+ l_{\alpha \bar{\alpha}} \circ
(UQ_{\bar{\alpha}}) )$ with $U = Q_{\alpha}^TS$. {The number of floating point 
operations (flops) required to compute $\Upsilon$ is $8|\alpha|n^2$. 
If $|\alpha|$ is large, we can compute $\Upsilon$ via the equivalent
expression $\Upsilon = S-Q((E-\Omega_0) \circ (Q^T S Q))Q^T$, which requires
$8|\bar{\alpha}|n^2$ flops.} 

Therefore, using the expression \eqref{eq:mv1} allow us to obtain an 
approximate solution to \eqref{eq:r-newton} efficiently whenever 
$|\alpha|$ or $|\bar{\alpha}|$ is small.
We summarize the procedure for solving the Newton equation \eqref{eq:r-newton} 
approximately in Algorithm~\ref{algo:linear}. 


\begin{algorithm2e}[!htb]\label{algo:linear}
\caption{Solving the linear system \eqref{eq:r-newton}}
Compute $a = -\cA (\frac{\mu_k}{2\mu_k+1}\cI - \cT)F^k$ \;
Use the CG or symmetric QMR method to solve
$(\frac{\mu_k^2}{2\mu_k+1}\cI+\cA\cT\cA^*)d_s =  a$ inexactly, where the matrix-vector multiplication is computed by \eqref{eq:mv1} \;
Compute the Newton direction $S^k =  \frac{1}{\mu_k(\mu_k+1)}(\mu_k \cI + \cT) (-F^k+ \cA^* d_s)$.
\end{algorithm2e}

\subsection{Strategy for updating $Z^k$} A few safeguard strategies are developed in order to stabilize the semi-smooth
Newton algorithm and maintain global convergence. Let $U^{k} = Z^{k} + S^k$ be a
new trial point from the Newton step and set $\xi_0 = \|F(Z^0)\|_2$.  If the residual
 $\|F(U^k)\|$ is
sufficiently decreased, i.e., $\|F(U^{k})\|_2 \le \nu \xi_{k}$ 
with $0<\nu<1$, then we update  
\be \label{eq:Newton-succ} Z^{k+1} = U^k, \; \xi_{k+1}=\|F(U^k)\|_2 \mbox{ and } \lambda_{k+1}=
\lambda_k. 
\ee


Otherwise, we examine the
ratio
\be\label{eq:rho}
\rho_k=\frac{-\iprod{F(U^k)}{S^k}}{\|S^k\|_F^2}
\ee
to decide how to update $Z^k$, $\xi_k$ and $\lambda_k$.  
 If $\rho_k\geq\eta_1$ for some $\eta_1$ that satisfies 
$0<\eta_1\le\eta_2<1$, we compute a new trial point using so-called \textit{hyperplane projection} 
step \cite{SS1999}:
\be\label{eq:HP-step}
V^{k}=Z^k-\frac{\iprod{F(U^k)}{Z^k-U^k}}{\|F(U^k)\|_2^2}F(U^k).
\ee
 Assume that the set of the optimal solutions
of  \eqref{eq:fix1} is $\Omega$. 
  By the monotonicity of $F$, for any optimal solution $Z^*$, one always has
 $ \iprod{F(U^k)}{Z^*-U^k}\leq 0$.  If the ratio $\rho_k > 0$, then we have 
$ \iprod{F(U^k)}{-S^k}>0$.  
Therefore, the hyperplane
\[
H_k:=\{Z\in\R^{n \times n}|\iprod{F(U^k)}{Z-U^k}=0\}
\]
strictly separates $Z^k$ from the solution set $\Omega$. 
It is easy to show that the point $V^{k}$ defined in \eqref{eq:HP-step} is the projection of $Z^k$ onto the
hyperplane $H_k$ and it is closer to $Z^*$ than $Z^k$.  This projection step can be used to correct a 
potentially poor Newton step. Hence, we set $Z^{k+1}=V^k$ if $\|F(V^k)\|_2 \leq
\|F(Z^k)\|_2$. Otherwise, we still take a DRS iteration, i.e., $W^k =
Z^k-F(Z^k)$.
In summary, we set
\be\label{eq:zk}
Z^{k+1}=\begin{cases}
V^k,\ &\textrm{if} \; \rho_k\geq\eta_1 \; \mbox{and} \; \|F(V^k)\|_2 \leq
\|F(Z^k)\|_2, \; \mbox{ [projection step]}\\
W^k, &\textrm{if}\; \rho_k\geq\eta_1 \; \mbox{and} \; \|F(V^k)\|_2 >
\|F(Z^k)\|_2, \; \mbox{ [DRS step]}\\
Z^k, &\textrm{if}\; \rho_k<\eta_1, \qquad \qquad\qquad \qquad\qquad \quad\quad  \mbox{ [unsuccessful step]}
\end{cases}
\ee
Then the  parameters $\xi_{k+1}$ and 
$\lambda_{k+1}$ are updated as  
\begin{equation}\label{eq:lambda}
\xi_{k+1}=\xi_k, \quad 
  \lambda_{k+1} \in
\begin{cases}
(\uline{\lambda},\lambda_k), &\textrm{if}\ \rho_k\geq\eta_2,\\[8pt]
[\lambda_k,\gamma_1\lambda_k], &\textrm{if}\ \eta_1\leq\rho_k<\eta_2,\\[8pt]
(\gamma_1\lambda_k,\gamma_2\lambda_k], &\textrm{otherwise,}
\end{cases}
\end{equation}
where  $1<\gamma_1 <\gamma_2$ and $\uline{\lambda}>0$ is a small positive constant.



 The complete approach to solve (\ref{eq:fix1}) is summarized
in Algorithm \ref{algo:newton}.

\begin{algorithm2e}[htb!]
\caption{\emph{An Adaptive Semi-smooth Newton (ASSN) method for SDP}}
\label{algo:newton}
   Give  $0<\tau,\nu<1$,  $0<\eta_1\leq\eta_2<1$ and
  $1<\gamma_1\leq\gamma_2$ \; Choose $Z^0$ and $\varepsilon>0$. Set $k=0$ and
  $\xi_0=\|F(Z^0)\|_2$\;
  \While{not ``converged''}{
Select $J_k\in\partial_B F(Z^k)$\;
Solve the linear system (\ref{eq:r-newton})
approximately such that $S^k$ satisfies (\ref{eq:residual})\;
 Compute $U^k=Z^k+S^k$ and calculate the ratio $\rho_k$ as in (\ref{eq:rho})\;
 If $\|F(U^k)\|_2 \le \nu
 \xi_k$, Set update $Z^{k+1}$, $\xi_{k+1}$ and $\lambda_{k+1}$ according to
\eqref{eq:Newton-succ}. Otherwise, set them  according to (\ref{eq:zk}) and
(\ref{eq:lambda}), respectively\; 
  Set $k=k+1$\;
  }
\end{algorithm2e}


The following theorem establishes the global convergence of Algorithm~\ref{algo:newton}.


\begin{theorem}\label{th:newton-sdp}
Suppose that $\{Z_k\}$ is a sequence generated by Algorithm
\ref{algo:newton}. Then the residuals of $\{Z_k\}$ converge to 0, i.e., $\lim_{k \rightarrow \infty} ||F(Z_k)|| = 0$.
\end{theorem}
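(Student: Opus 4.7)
The plan is to partition the iterations of Algorithm~\ref{algo:newton} into three branches---(N) the Newton-successful update \eqref{eq:Newton-succ}; (P) the projection or DRS update given by the first two cases of \eqref{eq:zk}; and (U) the unsuccessful case $Z^{k+1}=Z^k$---and to track two quantities along the entire sequence: the residual norm $\|F(Z^k)\|_2$ and, for an arbitrary $Z^*\in\Omega$, the distance $\|Z^k-Z^*\|_F$.

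The first step is to establish that $\|F(Z^k)\|_2$ is non-increasing, so it converges to some $\bar F\ge 0$. The invariant $\|F(Z^k)\|_2\le\xi_k$ is maintained inductively: in (N), $\xi_{k+1}$ is reset to $\|F(U^k)\|_2\le\nu\xi_k$, while in (P) and (U), $\xi_k$ is preserved and $\|F(Z^{k+1})\|_2\le\|F(Z^k)\|_2$ holds either by the explicit rule of \eqref{eq:zk} on the projection subcase, trivially on the unsuccessful subcase, or via the identity $F(W^k)=T_{\textrm{DRS}}(Z^k)-T_{\textrm{DRS}}(T_{\textrm{DRS}}(Z^k))$ together with the nonexpansiveness of $T_{\textrm{DRS}}$ established in the proof of Lemma~\ref{lemma:semimono} on the DRS subcase. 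If (N) fires infinitely often, the geometric factor $\nu<1$ forces $\xi_k\to 0$, hence $\|F(Z^k)\|_2\to 0$ and we are done. We therefore focus on the case where only finitely many (N) steps occur and $\bar F>0$ is assumed for contradiction.

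Suppose first that beyond some index only (U) steps occur. Then $Z^k$ stagnates at some $\bar Z$, $F^k$ is constant, and rule \eqref{eq:lambda} drives $\lambda_k\to\infty$ so that $\mu_k=\lambda_k\|F^k\|_2\to\infty$. From \eqref{eq:rk}, the residual bound \eqref{eq:residual}, and $\cJ_k\succeq 0$ (Lemma~\ref{lemma:semimono}), one derives $(1-\tau)\mu_k\|S^k\|_F\le\|F^k\|_2$, giving $\|S^k\|_F\to 0$, and simultaneously $-\iprod{F^k}{S^k}\ge(1-\tau)\mu_k\|S^k\|_F^2$. Strong semi-smoothness then yields $F(U^k)-F^k=\cJ_k[S^k]+O(\|S^k\|_F^2)$, and substituting into \eqref{eq:rho} produces $\rho_k\ge(1-\tau)\mu_k+o(1)\to\infty$, contradicting $\rho_k<\eta_1<1$. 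Thus infinitely many (P) steps must occur in the tail. On these, monotonicity of $F$ (Lemma~\ref{lemma:semimono}) gives $\iprod{F(U^k)}{Z^*-U^k}\le 0$, so the hyperplane $H_k$ strictly separates $Z^k$ from $Z^*$, which in turn yields Fej\'er monotonicity $\|Z^{k+1}-Z^*\|_F\le\|Z^k-Z^*\|_F$ and the summable improvement
\begin{equation*}
\sum_{k\in(\mathrm{P})}\|V^k-Z^k\|_F^2=\sum_{k\in(\mathrm{P})}\frac{\rho_k^2\|S^k\|_F^4}{\|F(U^k)\|_2^2}<\infty.
\end{equation*}
Combined with $\rho_k\ge\eta_1$ and boundedness of $\{Z^k\}$ (hence of $\|F(U^k)\|_2$ by continuity of $F$), this forces $\|S^k\|_F\to 0$ along the (P)-subsequence, and a final application of the Newton equation squeezes $\|F^k\|_2\to 0$, contradicting $\bar F>0$.

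The main obstacle is knitting the three branches into a single Lyapunov-type argument; in particular, the $\rho_k\to\infty$ contradiction in a tail of (U)-steps is the delicate piece, as it requires simultaneously exploiting the semi-smooth expansion of $F$, the positive semidefiniteness of the generalized Jacobian, and the adaptive growth of $\lambda_k$ to extract quantitative consequences from \eqref{eq:rk}--\eqref{eq:rho}. Once this is in place, the Fej\'er-monotone projection argument is a standard adaptation of the framework of~\cite{SS1999} to the semi-smooth setting.
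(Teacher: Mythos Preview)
The paper's own proof is a one-line citation: it verifies the hypotheses (strong semi-smoothness, monotonicity, firm nonexpansiveness of $T_{\mathrm{DRS}}$) via Lemma~\ref{lemma:semimono} and then appeals to Theorem~3.10 of~\cite{xiao2016regularized}. Your proposal is therefore a direct reconstruction of the argument behind that cited result, and your three-branch (N)/(P)/(U) decomposition together with the Fej\'er-monotone projection mechanism of~\cite{SS1999} is indeed the right skeleton.

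That said, two steps in your sketch are genuinely incomplete. First, the (P) branch of \eqref{eq:zk} has \emph{two} subcases, and your hyperplane/Fej\'er/summability display only treats the projection subcase $Z^{k+1}=V^k$. For the DRS subcase $Z^{k+1}=W^k=T_{\mathrm{DRS}}(Z^k)$, firm nonexpansiveness gives $\|W^k-Z^*\|_F^2\le\|Z^k-Z^*\|_F^2-\|F(Z^k)\|_2^2$, so under the standing contradiction hypothesis $\|F(Z^k)\|_2\ge\bar F>0$ this subcase can occur only finitely often; you must say this before restricting to the projection subcase. Second, your ``final application of the Newton equation'' to deduce $\|F^k\|_2\to 0$ from $\|S^k\|_F\to 0$ requires $\mu_k$ bounded, which you never establish. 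The missing ingredient is the inequality $\rho_k\ge(1-\tau)\mu_k-L$ (valid at \emph{every} iterate from $\cJ_k\succeq 0$ and Lipschitz continuity of $F$, which you essentially derived in the pure-(U) analysis): once $\lambda_k>(L+\eta_2)/((1-\tau)\bar F)$ it forces $\rho_k>\eta_2$ and hence $\lambda_{k+1}<\lambda_k$ by \eqref{eq:lambda}, yielding a uniform bound on $\lambda_k$ and thus on $\mu_k=\lambda_k\|F^k\|_2$.

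A minor point: the expansion $F(U^k)-F^k=\cJ_k[S^k]+O(\|S^k\|_F^2)$ you invoke places the Jacobian at $Z^k$, whereas Definition~\ref{def:semi} takes $\cJ\in\partial F(U^k)$. You do not actually need semi-smoothness here; Lipschitz continuity alone gives $|\langle F(U^k)-F^k,S^k\rangle|\le L\|S^k\|_F^2$, which already yields $\rho_k\ge(1-\tau)\mu_k-L$ and drives both the (U)-tail contradiction and the boundedness of $\lambda_k$ above.
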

\begin{proof}
 The strongly semi-smoothness and monotonicity has been shown in Lemma
 \ref{lemma:semimono} and the firmly non-expansiveness of fixed-point mapping $T_{\mathrm{DRS}} = I - F$ has been shown in \cite{LM1979}.
 The proof is completed according to Theorem 3.10 in \cite{xiao2016regularized}.
\end{proof}


\section{Numerical Results} \label{sec:num}
In this section, we demonstrate the effectiveness of the semi-smooth Newton
method presented in the previous section. We implemented 
the algorithm mostly in MATLAB. Our codes are built based on
+SDPNAL \cite{Zhao2009}, SDPNAL+ \cite{Chaykin2016} and ADMM+ \cite{MR3342702}, and use most of the key implementation details and subroutines in these solvers. Some parts of the code are written in 
the C Language and interfaced with MATLAB through MEX-files.  
All experiments are performed on a
 single node of a PC cluster, where each node has two Intel Xeon 2.40GHz CPUs
 with 12 cores and 256GB RAM. 

The test dataset is provided by Professor Maho Nakata and Professor Mituhiro
Fukuta. 
The detailed information about the dataset such as the basis sets used to 
discretize molecular orbitals, the geometries of the molecules etc.
can be found in \cite{Nakata2008}. Since the original dataset only
takes into account the spin symmetry, it does not specify 
additional block diagonal structures introduced by spatial symmetry 
of the molecular orbitals within each spin matrix block of the  
variables.
We preprocess the dataset to identify these diagonal blocks automatically
through matrix reordering. 
Our solver takes advantage of these block diagonal structures to 
reduce the complexity of the computation as described in
subsection \ref{sec:sdp-2rdm}. We applied the semi-smooth Newton
algorithm to the SDP formulation of the 2-RDM minimization problem with 
four different groups of N-representability conditions labeled as 
PQG, PQGT1, PQGT1T2, PQGT1T2'. The letters and numbers in each label simply 
indicate the N-representability conditions included in the SDP constraints. 
For example, PQGT1T2' means that the P, Q, G, T1, T2' conditions 
are included.

We compare the semi-smooth Newton's method proposed in this 
paper with the state-of-the-art Newton-CG augmented Lagrangian method 
implemented in the SDPNAL software package~\cite{Zhao2009}. 
We choose to compare with SDPNAL instead of SDPNAL+ \cite{Chaykin2016} because
our test examples are in standard SDP forms for which SDPNAL works better
than SDPNAL+ in our numerical experiments. The interior point methods are not
included in the comparison because they usually performs worse than SDPNAL. The stopping rules and a number of
parameters used in SDPNAL are set to their default values. 
We measure accuracy by examining four criteria: the primal infeasibility $\eta_p$
and the dual infeasibility $\eta_q$ that are defined by \eqref{eq:pdinf}, the gap $\eta_g$ between the primal and dual objective functions  
\be
\eta_g = \frac{|b^Ty - \tr (C^T X)|}{\max(1, \tr(C^T X))},
\ee
and the difference between the 2-RDM energy and full CI energy defined by
\be
\mathrm{err} = b^Ty - \mathrm{energy_{full CI}},
\label{eq:fcierr}
\ee
where $\mathrm{energy_{full CI}}$ values are taken from
\cite{Nakata2008}. 
The last criterion is often used in quantum chemistry to assess the 
accuracy of an approximation model. It is used here to also 
assess the effectiveness in including additional N-representability
conditions in the 2-RDM formulation of the ground state energy minimization 
problem. In the following tables, we use a short notation for the exponential
form. For example, -4.8-3 means $-4.8\times 10^{-3}$.

We experimented with two versions of the semi-smooth Newton methods. 
The difference between these two versions is in the stopping rules and 
how the parameter $\mu$ is updated. The first version, which is called
SSNSDPL, uses a stopping rule that is similar to the one used in SDPNAL. 
Specifically, in this version, the iterative procedure is terminated when 
$\eta_p < 3\times 10^{-6}$ and $\eta_d < 3\times 10^{-7}$ so that it can
achieve higher accuracy than that produced by SDPNAL. 
The choice of these parameters makes SSNSDPL comparable to SDPNAL. 
Another version, which is called SSNSDPH, uses a more stringent
stopping rule that requires $\eta_p < 1\times 10^{-4}$ and $\eta_d < 1\times
10^{-9}$.  In this version, the primal
infeasibility $\eta_p$ is allowed to be larger so that the algorithm converges
more rapidly. 
The dual variables are required to be more accurate since we ultimately retrieve the desired 1-RDM and 2-RDM from the dual variables.  This version can reach a ``$\mathrm{err}$" level that is close to the one reported in~\cite{Nakata2008}. In this version, we also make the penalty parameter $\mu$
larger so that the stopping rules can be easier satisfied.

In Table \ref{tab:sp}, we compare the performance of SSNSDPL when it
is applied to the orginal dataset provided in~\cite{Nakata2008} and 
our preprocessed data that identifies additional block diagonal
structures through permutation. We can
see that the CPU time can be reduced by at least a factor of three on most 
examples labeled with PQGT1T2 and PQGT1T2'. For $\mathrm{C}$ atom and 
$\mathrm{F^{-}}$ system that exhibit high spatial symmetry, 
the CPU time measured in seconds (the column labeled by t in 
Table~\ref{tab:sp}) can be reduced by a factor of roughly six for SDP's 
that include the PQGT1T2 and PQGT1T2' conditions.  These experiments 
illustrate the importance of exploiting spatial symmetry to identify 
block diagonal structures in the approximate solution and consequently 
reduce the computational cost significantly. For the problems that only 
include the PQG and PQGT1 conditions, the amount of improvement is less 
spectacular, because the sizes of the diagonal blocks in these examples are 
small. 
In fact, the larger the blocks in Table \ref{tab:blk} is, the more significant
 effectiveness of the symmetry is. Thereafter, all experiments are performed on
 the preprocessed data. 
 
%
\begin{table}
{
\setlength{\tabcolsep}{1pt}
\footnotesize
\begin{longtable}{|c|c| c|c|c|c|c|c| c|c|c|c|c|c|}
\caption{The comparison of the performance on the original and proprocessed
SDPs. The number -4.8-3 means $-4.8\times 10^{-3}$}\label{tab:sp}\\
\hline 
\multicolumn{1}{|c|}{}& \multicolumn{1}{|c|}{}
 & \multicolumn{6}{|c|}{preprocessed SDP}
 & \multicolumn{6}{|c|}{original SDP}\\ 
\hline 
system & condition
 & err & $\eta_p$ & $\eta_d$ & $\eta_g$ & it & t 
 & err & $\eta_p$ & $\eta_d$ & $\eta_g$ & it & t \\ \hline 
\endhead 
\hline 
\endfoot 
$\mathrm{C}$& PQG
 &-4.8-3 & 1.3-6 & 3.0-7 &  1.0-5 &   335 & 136
 &-4.7-3 & 3.2-7 & 2.7-7 &  9.8-6 &   163 & 82\\  
$\mathrm{C}$& PQGT1
 &-3.8-3 & 5.9-7 & 2.9-7 &  7.4-6 &   217 & 129
 &-3.8-3 & 5.3-7 & 2.9-7 &  7.0-6 &   157 & 189\\  
$\mathrm{C}$& PQGT1T2
 &-9.9-4 & 9.3-7 & 2.8-7 &  7.7-6 &   229 & 366
 &-9.5-4 & 1.1-6 & 3.0-7 &  6.5-6 &   219 & 1915\\  
$\mathrm{C}$& PQGT1T2'
 &-5.4-4 & 1.2-6 & 3.0-7 &  7.7-6 &   226 & 361
 &-5.4-4 & 1.7-6 & 3.0-7 &  1.2-5 &   235 & 2051\\  
$\mathrm{CH}$& PQG
 &-1.3-2 & 1.3-6 & 3.0-7 &  9.5-6 &   126 & 77
 &-1.3-2 & 2.1-6 & 2.0-7 &  8.2-6 &   196 & 167\\  
$\mathrm{CH}$& PQGT1
 &-1.0-2 & 1.7-7 & 2.7-7 &  9.7-6 &   122 & 220
 &-1.0-2 & 9.7-7 & 2.7-7 &  8.5-6 &   118 & 446\\  
$\mathrm{CH}$& PQGT1T2
 &-2.5-3 & 1.9-6 & 3.0-7 &  1.0-5 &   271 & 2008
 &-2.5-3 & 4.0-7 & 3.0-7 &  1.1-5 &   268 & 6351\\  
$\mathrm{CH}$& PQGT1T2'
 &-1.1-3 & 5.1-7 & 2.9-7 &  9.9-6 &   294 & 2041
 &-1.1-3 & 3.9-7 & 2.9-7 &  9.6-6 &   282 & 6350\\  
$\mathrm{F^-}$& PQG
 &-1.3-2 & 6.3-7 & 1.8-7 &  8.5-6 &   119 & 72
 &-1.2-2 & 1.6-6 & 1.1-7 &  1.3-5 &   113 & 106\\  
$\mathrm{F^-}$& PQGT1
 &-9.2-3 & 1.9-6 & 2.6-7 &  1.3-5 &   144 & 211
 &-8.7-3 & 1.1-6 & 2.6-7 &  5.4-6 &   149 & 718\\  
$\mathrm{F^-}$& PQGT1T2
 &-2.6-3 & 2.1-6 & 2.7-7 &  2.2-5 &   225 & 1325
 &-2.7-3 & 1.3-6 & 2.8-7 &  1.7-5 &   235 & 10735\\  
$\mathrm{F^-}$& PQGT1T2'
 &-2.0-3 & 1.2-6 & 2.7-7 &  1.1-5 &   249 & 1359
 &-1.9-3 & 1.2-6 & 3.0-7 &  1.2-5 &   234 & 9485\\  
$\mathrm{H_2O}$& PQG
 &-1.9-2 & 2.2-6 & 1.2-7 &  1.6-6 &    87 & 93
 &-1.9-2 & 1.3-6 & 1.0-7 &  2.3-6 &    92 & 126\\  
$\mathrm{H_2O}$& PQGT1
 &-1.2-2 & 2.4-6 & 2.5-7 &  1.3-5 &   124 & 408
 &-1.2-2 & 2.3-7 & 2.9-7 &  1.3-5 &   126 & 1070\\  
$\mathrm{H_2O}$& PQGT1T2
 &-2.9-3 & 3.3-7 & 2.9-7 &  1.5-5 &   316 & 6213
 &-2.9-3 & 2.6-6 & 2.9-7 &  1.7-5 &   289 & 19392\\  
$\mathrm{H_2O}$& PQGT1T2'
 &-2.0-3 & 1.2-6 & 3.0-7 &  1.1-5 &   257 & 4679
 &-2.0-3 & 2.4-6 & 2.9-7 &  9.0-6 &   252 & 16339\\  
\hline
\end{longtable}
}
\end{table}

In addition to identifying block diagonal structures in the N-representibility
constraints, we can further improve the efficiency of SSNSDPL and SSNSDPH 
by taking advantage of the low rank structure of the variable matrices. 
 Recall from Theorem \ref{thm:lowrank}
that the ratios of the rank  
of the $X_j$ matrix (denoted by $r_j$) associated with the T2 condition 
over the dimension of $X_j$ (denoted by $d_j$) should be bounded 
 by $(\frac{\sqrt{3}}{8}(d^2+6))/(\frac{d^2}{8}(\frac{3d}{2}-1))$. 
For the C atom and CH molecule, $d$ is 20 and 24 respectively.
Thus, at the solution the ratios should be bounded by 0.06 and 0.05, respectively. In Figure~\ref{fig:rank}, we replace $r_j$ by the numerical rank computed from
the eigenvalue decompositions of the $X_j$ variable and show the ratios  
 for $j$'s that are 
 associated with the four largest $d_j$'s at each DRS iteration. 
 We observe that these ratios can be relatively high in the first 
few iterations. But they eventually become less than 0.1 after
a few hundred iterations. 
  This property is useful \eqref{eq:mv1} in the DRS and  the semi-smooth Newton
  methods.
 It follows from \eqref{eq:drs-admm2} that  the $X$ variable is the projection
 of the $Z$ variable to
 semidefinite cone and  $|\alpha|$ in \eqref{eq:mv1} is equal to the rank of
 $X_j$ in the case of 2-RDM. Therefore, solving the Newton system  
\eqref{eq:nleqn2} becomes much cheaper by using \eqref{eq:mv1} when $|\alpha|$
 is small.


\begin{figure}[!htb]
\centering
\subfigure[C]{
\includegraphics[width=0.45\textwidth,height=0.4\textwidth]{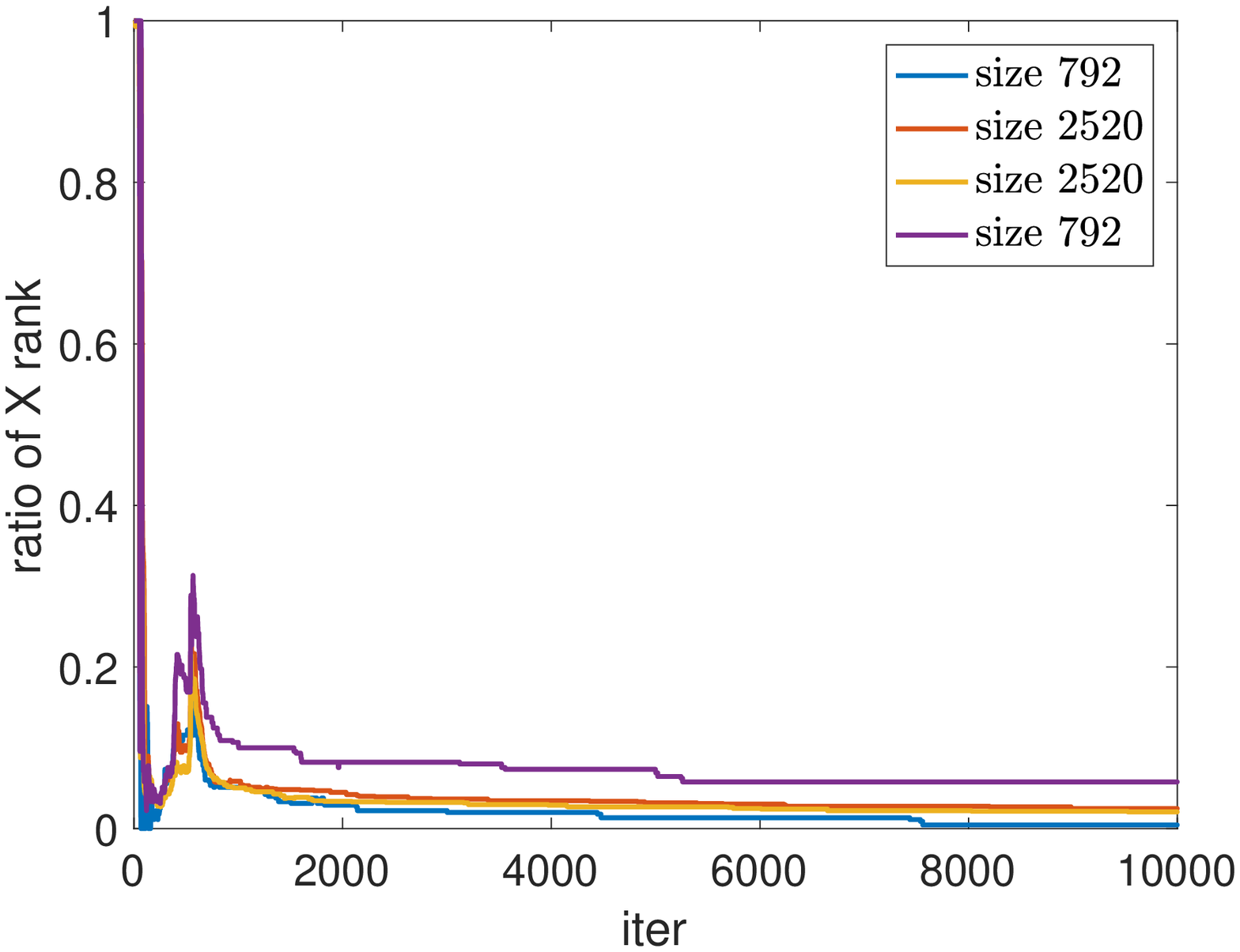}}
\subfigure[CH]{
\includegraphics[width=0.45\textwidth,height=0.4\textwidth]{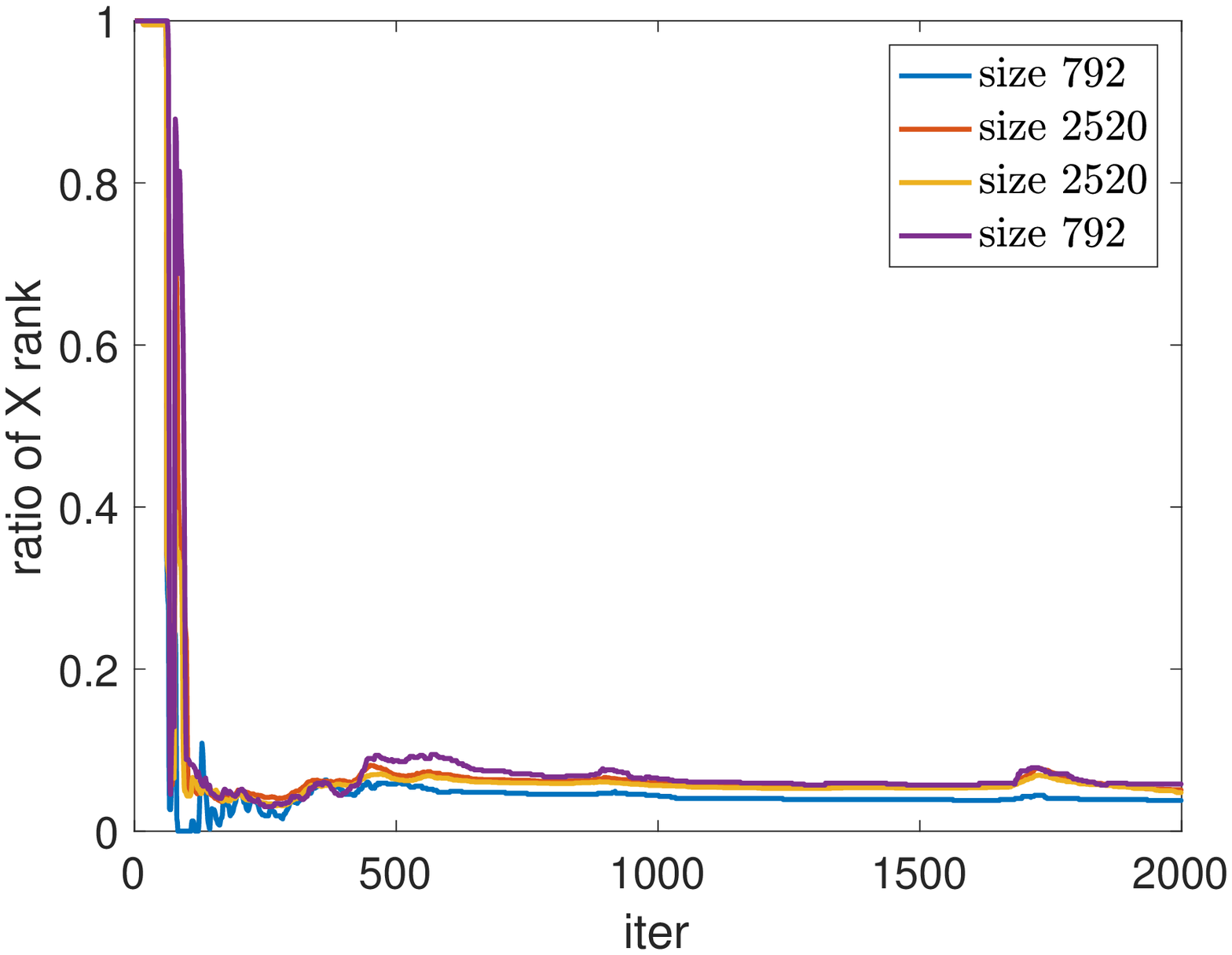}}
\caption{The percentage of the ranks of the first four largest blocks of the $X$
in the C
and CH systems}\label{fig:rank}
\end{figure}

Figure \ref{fig:gap} shows how the relative gap, primal infeasibility and dual
infeasibility in ADMM and SSNSDPL change with respect to the number of 
iterations when they are applied to the C atom system. 
We tested both algorithms on SDPs with the PQG N-representibility conditions 
(shown in subfigures a and c) and with the PQGT1T2 N-representibility 
conditions (shown in subfigures b and d.) In subfigures (a) and (b), 
we show the convergence history of ADMM for the first 10000 steps.  
In subfigures (c) and (d), we show the convergence history of SSNSDPL. 
The starting points of SSNSDPL are taken to be the solution produced
from running $500$ ADMM steps. 
We can see that the ADMM can produce a moderately 
 accurate solution in a few hundred iterations from subfigures (a) and (b). At that point, 
convergence becomes slow.  Many more iterations (10,000) are required
to reach high accuracy.  Using a starting point obtained from running 500 
ADMM iterations, we can use SSNSDPL to obtain a more accurate solution
in $250$ steps. 
Note that the duality gap as well as the primal and dual feasibility 
curves shown in (c) and (d) are highly oscillatory. The oscillation
is due to the adaptive update of the penalty parameter $\mu$ 
for achieving a faster overall convergence rate. If the penalty parameter 
is held fixed, these curves become much smoother. But more iterations are
needed to reach the desired accuracy.

\begin{figure}[!htb]
\centering
\subfigure[ADMM on C(PQG)]{
\includegraphics[width=0.45\textwidth,height=0.4\textwidth]{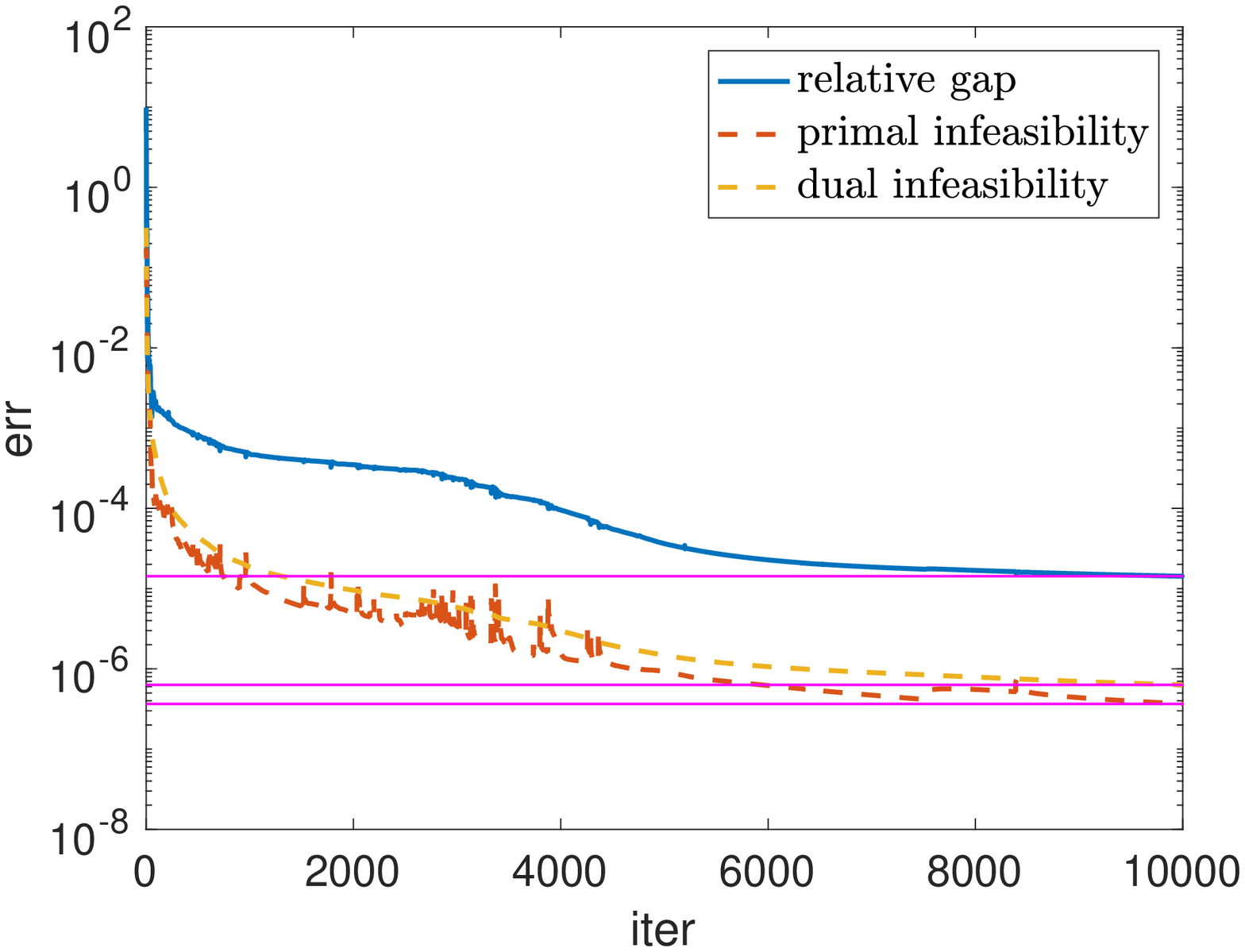}}
\vspace{5pt}
\subfigure[ADMM on C(PQGT1T2)]{
\includegraphics[width=0.45\textwidth,height=0.4\textwidth]{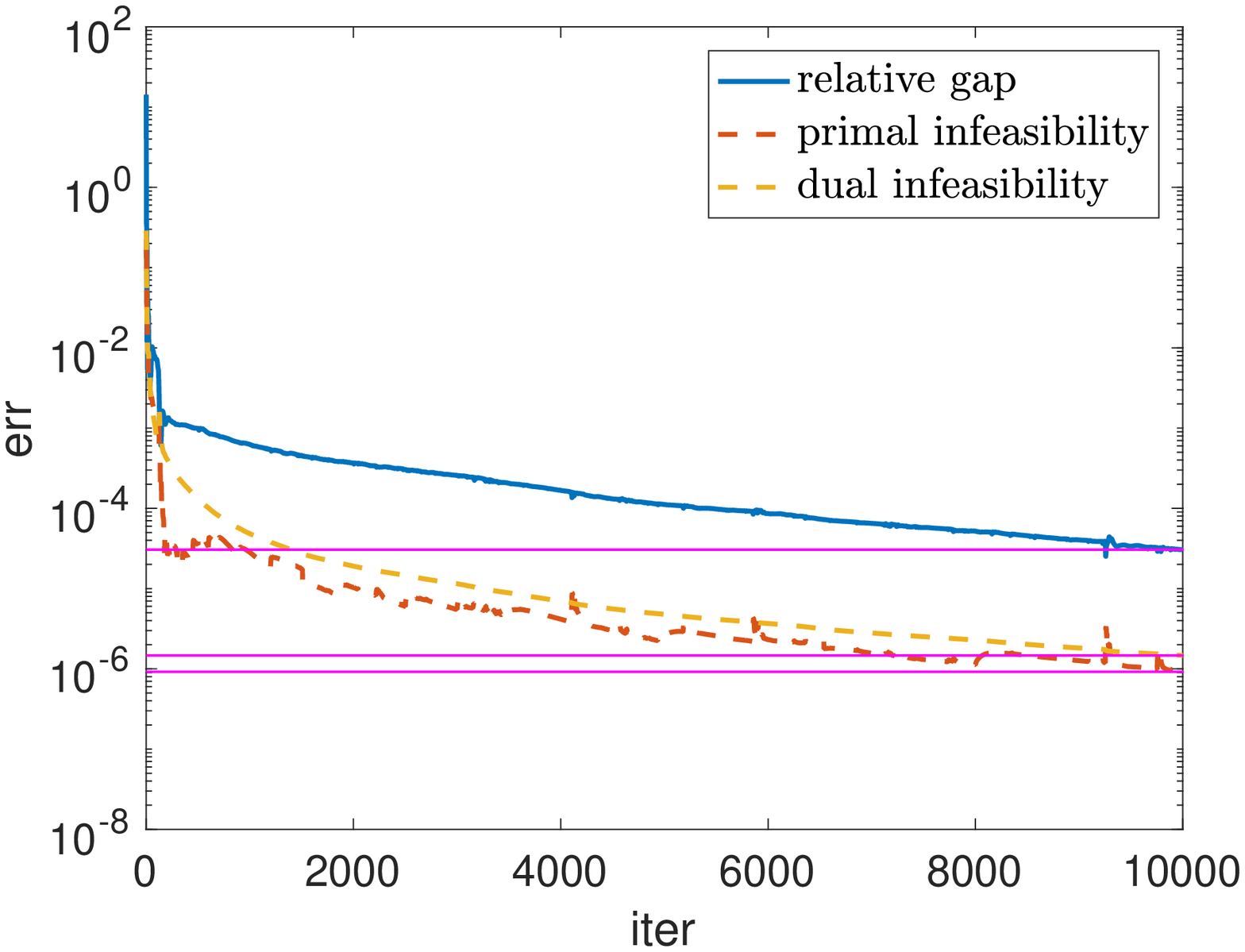}}
\subfigure[SSNSDPL on C(PQG)]{
\includegraphics[width=0.45\textwidth,height=0.4\textwidth]{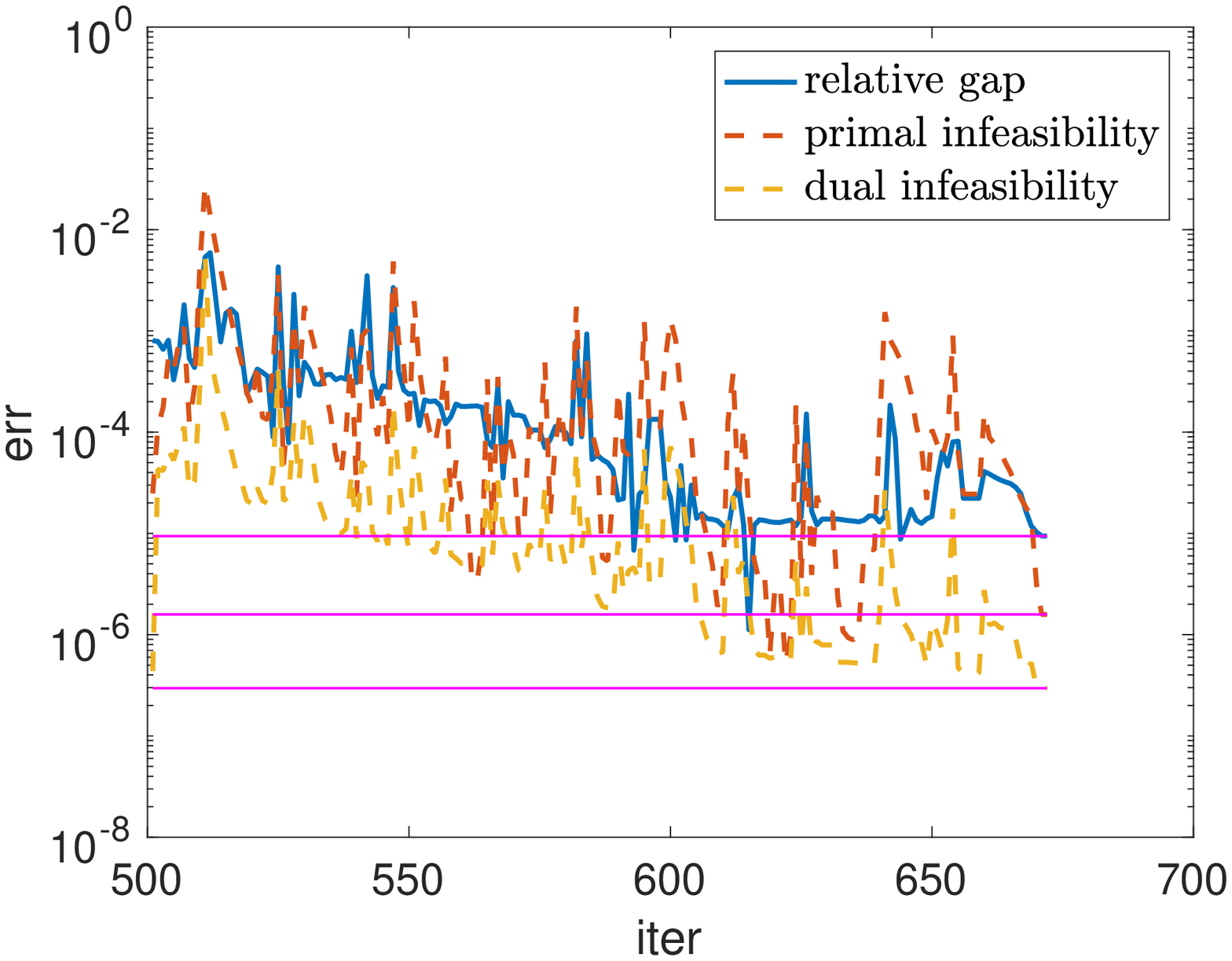}}
\subfigure[SSNSDPL on C(PQGT1T2)]{
\includegraphics[width=0.45\textwidth,height=0.4\textwidth]{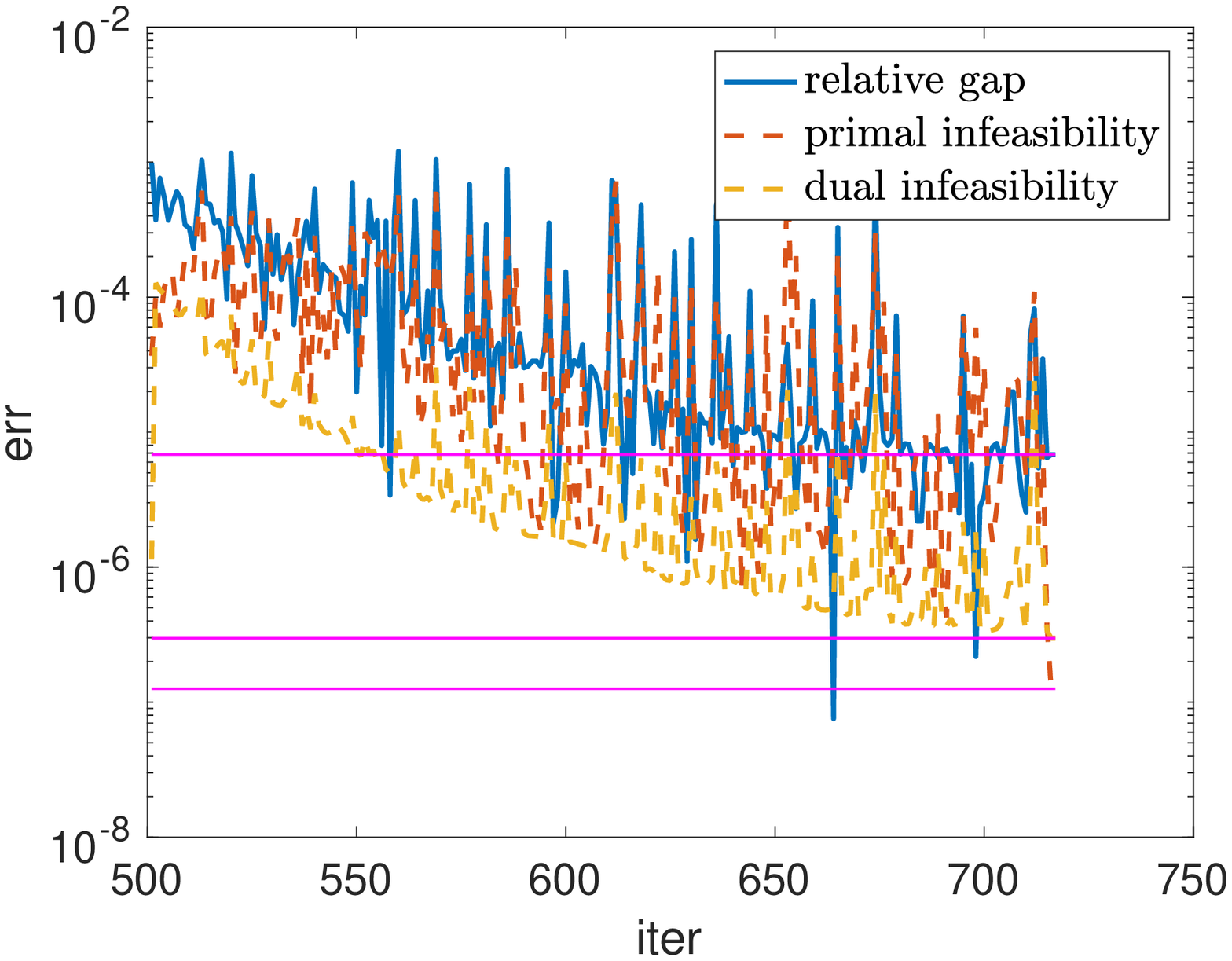}}
\caption{Relative gap, primal infeasibility and dual infeasibility}\label{fig:gap}
\end{figure}

The SSNSDPL and SSNSDPH methods have been successfully used to solve
the SDPs with several types of N-representability conditions for 
all test problems. In Table \ref{tab:err0}, we report the 
accuracy of the solution produced by SSNSDPH by comparing the 2-RDM ground 
state energy with the FCI energy and calculating their differences defined by
\eqref{eq:fcierr}. We can see that more accurate solutions are obtained
from SSNSDPH when more N-representability conditions are included in 
the constraints. These results are similar to the ones reported in
~\cite{Nakata2008}. 

{
\footnotesize
\begin{longtable}{| c || c | c |  c |  c |  c |  c |}
\caption{The error obtained by SSNSDPH on various N-representability conditions: PQG, PQGT1, PQGT1T2, and PQGT1T2'}\label{tab:err0}\\
\hline 
system & state & basis & PQG & PQGT1 & PQGT1T2 & PQGT1T2' \\ 
\hline 
\endfirsthead 
\hline 
system & state & basis & PQG & PQGT1 & PQGT1T2 & PQGT1T2' \\ 
\hline 
\endhead 
\hline 
\endfoot 
$\mathrm{AlH}$ & 1Sigma+ & STO6G& -2.3-3& -7.8-4& -2.4-5& -1.4-5\\ 
$\mathrm{B_2}$ & 3Sigmag- & STO6G& -9.6-2& -8.5-2& -6.5-2& -6.4-2\\ 
$\mathrm{BF}$ & 1Sigma+ & STO6G& -6.6-3& -3.5-3& -3.2-4& -3.1-4\\ 
$\mathrm{BH^+}$ & 2Sigma+ & STO6G& -4.2-5& -2.6-5& -1.0-6& -2.9-7\\ 
$\mathrm{BH}$ & 1Sigma+ & DZ& -6.5-3& -4.7-3& -8.6-5& -5.1-5\\ 
$\mathrm{BH_3O}$ & 1A1 & STO6G& -2.8-2& -1.2-2& -7.1-4& -6.9-4\\ 
$\mathrm{BN}$ & 3Pi & STO6G& -2.9-2& -1.7-2& -3.0-3& -2.7-3\\ 
$\mathrm{BO}$ & 2Sigma+ & STO6G& -1.2-2& -6.7-3& -1.2-3& -1.0-3\\ 
$\mathrm{Be(1)}$ & 1S & STO6G& -4.6-7& -4.8-7& -9.8-8& -1.2-7\\ 
$\mathrm{Be(2)}$ & 1S & SV& -5.8-5& -5.4-5& -1.8-6& -6.4-7\\ 
$\mathrm{BeF}$ & 2Sigma+ & STO6G& -3.1-3& -1.7-3& -2.6-4& -1.9-4\\ 
$\mathrm{BeH^+}$ & 1Sigma+ & STO6G& -2.4-5& -2.3-5& -2.5-7& -1.9-7\\ 
$\mathrm{BeH}$ & 2Sigma+ & STO6G& -4.5-5& -2.2-5& -5.3-7& -2.5-7\\ 
$\mathrm{BeO}$ & 1Sigma+ & STO6G& -1.3-2& -9.5-3& -1.7-3& -1.7-3\\ 
$\mathrm{C(1)}$ & 3P & DZ& -3.9-3& -3.1-3& -3.9-4& -5.1-5\\ 
$\mathrm{C(2)}$ & 3PSZ0 & DZ& -1.7-2& -1.4-2& -2.4-3& -2.0-3\\ 
$\mathrm{C_2^-}$ & 2Sigmag+ & STO6G& -2.6-2& -1.4-2& -2.4-3& -1.9-3\\ 
$\mathrm{C_2(1)}$ & 1Sigmag+ & STO6G& -4.6-2& -2.5-2& -3.4-3& -3.5-3\\ 
$\mathrm{C_2(2)}$ & 1Sigmag+ & VDZ& -5.4-2& -5.4-2& -3.2-3& -3.5-3\\ 
$\mathrm{CF}$ & 2Pir & STO6G& -7.7-3& -5.8-3& -6.2-4& -4.8-4\\ 
$\mathrm{CH}$ & 2Pir & DZ& -1.3-2& -9.6-3& -8.9-4& -3.1-4\\ 
$\mathrm{CH_2(1)}$ & 1A1 & DZ& -1.9-2& -1.2-2& -3.9-4& -3.1-4\\ 
$\mathrm{CH_2(2)}$ & 3B1 & DZ& 4.1-1& 4.2-1& 4.3-1& 4.3-1\\ 
$\mathrm{CH_3^+}$ & 1Ep & STO6G& -1.3-2& -3.8-3& -1.7-4& -1.6-4\\ 
$\mathrm{CH_3}$ & 2A2pp & VDZ& -1.7-2& -1.0-2& -9.4-4& -3.1-4\\ 
$\mathrm{CH_3N}$ & 1A1 & STO6G& -3.9-2& -1.6-2& -1.0-3& -9.8-4\\ 
$\mathrm{CH_4}$ & 1A1 & STO6G& -1.9-2& -4.1-3& -1.9-4& -1.8-4\\ 
$\mathrm{CN}$ & 2Sigma+ & STO6G& -2.4-2& -1.2-2& -2.1-3& -1.7-3\\ 
$\mathrm{CO^+}$ & 2Sigma+ & STO6G& -1.8-2& -9.2-3& -1.7-3& -1.4-3\\ 
$\mathrm{CO}$ & 1Sigma+ & STO6G& -1.2-2& -7.2-3& -8.6-4& -8.6-4\\ 
$\mathrm{F^-}$ & 1S & DZ+d& -1.2-2& -7.6-3& -3.8-4& -2.7-4\\ 
$\mathrm{FH_2^+}$ & 1A1 & STO6G& -1.1-3& -5.1-4& -1.7-5& -1.5-5\\ 
$\mathrm{H_2O}$ & 1A1 & DZ& -1.9-2& -1.1-2& -4.9-4& -4.0-4\\ 
$\mathrm{H_3}$ & 2A1p & DZ& -7.7-4& -5.5-4& -1.6-6& -7.9-8\\ 
$\mathrm{HF}$ & 1Sigma+ & DZ& -1.2-2& -5.8-3& -3.5-4& -2.7-4\\ 
$\mathrm{HLi_2}$ & 2A1 & STO6G& -1.0-3& -6.6-4& -7.2-5& -1.0-5\\ 
$\mathrm{HN_2^+}$ & 1Sigma+ & STO6G& -2.5-2& -1.1-2& -1.5-3& -1.5-3\\ 
$\mathrm{HNO}$ & 1Ap & STO6G& -1.9-2& -1.4-2& -8.9-4& -9.0-4\\ 
$\mathrm{Li}$ & 2S & STO6G& -3.3-8& -1.8-8& -1.7-8& -4.2-9\\ 
$\mathrm{Li_2}$ & 1Sigmag+ & STO6G& -3.7-4& -2.9-4& -6.2-6& -4.3-6\\ 
$\mathrm{LiF}$ & 1Sigma+ & STO6G& -1.6-3& -1.3-3& -2.5-4& -2.4-4\\ 
$\mathrm{LiH(1)}$ & 1Sigma+ & DZ& -3.5-4& -2.0-4& -2.0-6& -6.7-7\\ 
$\mathrm{LiH(2)}$ & 1Sigma+ & STO6G& -3.4-5& -2.5-5& -1.6-7& -9.3-8\\ 
$\mathrm{LiOH}$ & 1Sigma+ & STO6G& -8.6-3& -4.0-3& -5.8-4& -5.7-4\\ 
$\mathrm{N}$ & 4S & DZ& -2.4-3& -9.0-4& -9.8-5& -1.1-5\\ 
$\mathrm{N_2^+}$ & 2Sigmag+ & STO6G& -3.1-2& -1.6-2& -2.6-3& -2.2-3\\ 
$\mathrm{N_2}$ & 1Sigmag+ & STO6G& -1.2-2& -8.8-3& -1.2-3& -1.2-3\\ 
$\mathrm{NH(1)}$ & 1Delta & DZ& -1.7-2& -1.3-2& -4.9-4& -4.5-4\\ 
$\mathrm{NH(2)}$ & 3Sigma- & DZ& -9.7-3& -5.2-3& -5.4-4& -1.4-4\\ 
$\mathrm{NH_2^-(1)}$ & 1A1 & DZ& -2.4-2& -1.5-2& -6.5-4& -5.7-4\\ 
$\mathrm{NH_2^-(2)}$ & 1A1 & STO6G& -2.0-3& -1.3-3& -2.2-5& -2.0-5\\ 
$\mathrm{NH_3^+}$ & 2A2pp & STO6G& -9.8-3& -1.8-3& -2.0-4& -1.1-4\\ 
$\mathrm{NH_3}$ & 1A1 & VDZ& -2.3-2& -1.4-2& -5.0-4& -4.7-4\\ 
$\mathrm{NH_4^+}$ & 1A1 & STO6G& -1.7-2& -4.2-3& -2.3-4& -2.2-4\\ 
$\mathrm{Na}$ & 2S & STO6G& -1.0-3& -4.9-4& -5.2-5& -3.9-5\\ 
$\mathrm{NaH}$ & 1Sigma+ & STO6G& -3.5-3& -1.6-3& -8.3-5& -7.4-5\\ 
$\mathrm{Ne}$ & 1S & DZ& -6.7-3& -2.7-3& -2.3-4& -1.5-4\\ 
$\mathrm{O(1)}$ & 1D & DZ& -1.9-2& -1.4-2& -1.3-3& -1.2-3\\ 
$\mathrm{O(2)}$ & 3P & DZ& -1.2-2& -6.3-3& -6.9-4& -2.4-4\\ 
$\mathrm{O(3)}$ & 3PSZ0 & DZ& -2.3-2& -1.9-2& -2.8-3& -1.6-3\\ 
$\mathrm{O_2^+}$ & 2Pig & STO6G& -1.7-2& -1.5-2& -2.4-3& -2.1-3\\ 
$\mathrm{P}$ & 4S & 631G& -8.3-4& -3.0-4& -6.4-5& -7.3-6\\ 
$\mathrm{SiH_4}$ & 1A1 & STO6G& -1.9-2& -3.6-3& -1.9-4& -1.6-4\\ 

\end{longtable}
}

In Table~\ref{tab:opt}, we compare the accuracy and efficiency of 
SDPNAL, SSNSDPL and SSNSDPH. The fifth column labeled by ``$\mathrm{itr}$"  
gives the total number of Newton systems that was solved. 
Therefore, it is meaningful 
to compare these columns. The column labeled by t
gives the CPU time in seconds. 
From the table, we can observe that SSNSDPL and SDPNAL achieve the same 
level of accuracy.  In terms of efficiency, SSNSDPL seems to be faster than 
SDPNAL for most examples. We ran SSNSDPH with a smaller
$\eta_d$ than SSNSDPL. Hence, it produces more accurate energy values. 
Table~\ref{tab:opt} shows that the errors of SSNSDPH is indeed smaller than 
SSNSDPL and they are similar to these in \cite{Nakata2008}. 

{
\setlength{\tabcolsep}{0.7pt}
\footnotesize
\begin{longtable}{|c| c|c|c|c|c|c | c|c|c|c|c|c | c|c|c|c|c|c |}
\caption{A summary of computational results of SDPNAL, SSNSDPL and SSNSDPH.}\label{tab:opt}\\
\hline 
\multicolumn{1}{|c|}{}
 & \multicolumn{6}{|c|}{SDPNAL}
 & \multicolumn{6}{|c|}{SSNSDPL}
 & \multicolumn{6}{|c|}{SSNSDPH}\\ 
\hline 
id
 & err & $\eta_p$ & $\eta_d$ & $\eta_g$ & itr & t 
 & err & $\eta_p$ & $\eta_d$ & $\eta_g$ & itr & t 
 & err & $\eta_p$ & $\eta_d$ & $\eta_g$ & itr & t \\ \hline 
\endfirsthead 
\hline 
\multicolumn{1}{|c|}{}
 & \multicolumn{6}{|c|}{SDPNAL}
 & \multicolumn{6}{|c|}{SSNSDPH}
 & \multicolumn{6}{|c|}{SSNSDPL}\\ 
\hline 
id
 & err & $\eta_p$ & $\eta_d$ & $\eta_g$ & itr & t 
 & err & $\eta_p$ & $\eta_d$ & $\eta_g$ & itr & t 
 & err & $\eta_p$ & $\eta_d$ & $\eta_g$ & itr & t \\ \hline 
\endhead 
\hline 
\endfoot 
$\mathrm{AlH}$
 &-5.3-4 & 4.8-6 & 5.1-7 &  5.5-6 &   155 &  411
 &-3.6-4 & 8.4-7 & 3.0-7 &  1.2-6 &    84 & 305
 &-1.4-5 & 1.4-5 & 7.5-10 &  9.8-6 &   155 & 504\\  
$\mathrm{B_2}$
 &-6.5-2 & 1.7-5 & 7.3-7 &  1.2-5 &   225 &  2260
 &-6.5-2 & 6.6-7 & 2.7-7 &  5.3-6 &   182 & 1938
 &-6.4-2 & 1.5-5 & 8.3-10 &  1.6-5 &   197 & 2152\\  
$\mathrm{BF}$
 &-7.9-4 & 7.8-6 & 6.0-7 &  1.3-5 &   175 &  466
 &-7.0-4 & 2.1-6 & 2.7-7 &  3.7-6 &   134 & 433
 &-3.1-4 & 1.4-5 & 9.7-10 &  1.4-5 &   185 & 603\\  
$\mathrm{BH^+}$
 &-1.2-4 & 2.4-6 & 7.1-7 &  2.8-6 &   192 &  86
 &-9.0-5 & 2.0-6 & 2.3-7 &  2.4-7 &   163 & 72
 &-2.9-7 & 1.0-5 & 9.8-10 &  4.8-6 &   245 & 102\\  
$\mathrm{BH}$
 &-6.1-4 & 8.3-5 & 7.0-7 &  1.2-4 &   252 &  2004
 &-5.2-4 & 7.2-7 & 2.9-7 &  1.3-5 &   258 & 2151
 &-5.1-5 & 3.9-5 & 9.1-10 &  1.2-4 &   234 & 2105\\  
$\mathrm{BH_3O}$
 &-1.7-3 & 1.1-5 & 6.9-7 &  1.7-5 &   183 &  4567
 &-1.6-3 & 1.3-6 & 2.9-7 &  1.7-6 &    99 & 3216
 &-6.9-4 & 7.4-6 & 9.7-10 &  1.1-5 &   205 & 5667\\  
$\mathrm{BN}$
 &-3.3-3 & 2.2-5 & 7.2-7 &  1.9-5 &   214 &  494
 &-3.2-3 & 1.6-6 & 2.8-7 &  4.2-6 &   108 & 387
 &-2.7-3 & 1.3-5 & 1.0-9 &  1.9-5 &   246 & 723\\  
$\mathrm{BO}$
 &-1.6-3 & 9.2-6 & 7.0-7 &  1.6-5 &   171 &  666
 &-1.5-3 & 2.9-6 & 2.9-7 &  1.8-6 &    88 & 490
 &-1.0-3 & 1.1-5 & 1.0-9 &  2.6-5 &   223 & 1069\\  
$\mathrm{Be(1)}$
 &-4.7-5 & 2.2-7 & 9.5-7 &  1.5-6 &   116 &  19
 &-3.9-5 & 1.3-7 & 3.0-7 &  1.0-6 &   195 & 49
 &-1.2-7 & 9.2-6 & 4.1-10 &  1.5-6 &   227 & 54\\  
$\mathrm{Be(2)}$
 &-1.6-4 & 7.4-5 & 7.1-7 &  6.0-6 &   221 &  249
 &-1.4-4 & 6.1-7 & 3.0-7 &  5.8-6 &   464 & 412
 &-6.4-7 & 1.3-5 & 9.9-10 &  3.3-7 &   313 & 327\\  
$\mathrm{BeF}$
 &-6.6-4 & 1.2-5 & 6.6-7 &  1.8-5 &   177 &  482
 &-5.2-4 & 1.2-6 & 3.0-7 &  1.0-6 &   179 & 553
 &-1.9-4 & 9.4-6 & 9.9-10 &  1.1-5 &   187 & 608\\  
$\mathrm{BeH^+}$
 &-7.3-5 & 9.1-6 & 5.6-7 &  4.5-6 &   198 &  95
 &-7.4-5 & 3.8-7 & 2.8-7 &  3.1-6 &   254 & 96
 &-1.9-7 & 1.2-5 & 9.9-10 &  1.7-6 &   217 & 87\\  
$\mathrm{BeH}$
 &-8.5-5 & 6.4-6 & 7.6-7 &  4.7-6 &   201 &  91
 &-7.1-5 & 1.7-6 & 3.0-7 &  1.8-8 &   144 & 65
 &-2.5-7 & 1.8-5 & 9.5-10 &  9.1-7 &   229 & 101\\  
$\mathrm{BeO}$
 &-2.2-3 & 1.3-5 & 7.2-7 &  2.3-5 &   199 &  495
 &-2.1-3 & 1.8-6 & 2.7-7 &  5.8-6 &   105 & 375
 &-1.7-3 & 6.4-6 & 9.7-10 &  1.4-5 &   220 & 695\\  
$\mathrm{C(1)}$
 &-5.5-4 & 3.3-5 & 5.9-7 &  3.2-5 &   245 &  440
 &-5.4-4 & 1.2-6 & 3.0-7 &  7.7-6 &   226 & 361
 &-5.1-5 & 1.3-5 & 8.5-10 &  3.7-5 &   295 & 428\\  
$\mathrm{C(2)}$
 &-2.6-3 & 1.5-5 & 6.5-7 &  1.3-5 &   233 &  424
 &-2.6-3 & 1.5-6 & 3.0-7 &  7.3-6 &   229 & 355
 &-2.0-3 & 1.2-5 & 9.2-10 &  2.4-5 &   230 & 386\\  
$\mathrm{C_2^-}$
 &-2.4-3 & 7.0-6 & 6.1-7 &  1.4-5 &   175 &  319
 &-2.4-3 & 9.9-7 & 2.9-7 &  3.9-6 &   115 & 244
 &-1.9-3 & 9.6-6 & 9.0-10 &  1.9-5 &   235 & 418\\  
$\mathrm{C_2(1)}$
 &-4.2-3 & 5.2-6 & 7.7-7 &  5.3-6 &   184 &  320
 &-4.1-3 & 9.6-7 & 3.0-7 &  3.3-6 &   112 & 241
 &-3.5-3 & 8.8-6 & 9.4-10 &  7.2-6 &   214 & 386\\  
$\mathrm{C_2(2)}$
 &3.7-3 & 5.2-4 & 7.1-7 &  5.3-4 &   234 &  7291
 &-5.1-3 & 1.6-6 & 2.8-7 &  7.0-6 &   217 & 8381
 &-3.5-3 & 1.4-5 & 9.4-10 &  3.2-5 &   154 & 6858\\  
$\mathrm{CF}$
 &-8.8-4 & 8.5-6 & 5.1-7 &  1.1-5 &   168 &  443
 &-8.1-4 & 1.4-6 & 2.9-7 &  2.1-6 &   115 & 437
 &-4.8-4 & 9.9-6 & 9.7-10 &  1.1-5 &   194 & 634\\  
$\mathrm{CH}$
 &-1.1-3 & 9.3-5 & 6.4-7 &  1.1-4 &   234 &  1875
 &-1.1-3 & 5.1-7 & 2.9-7 &  9.9-6 &   294 & 2041
 &-3.1-4 & 1.9-5 & 1.0-9 &  5.6-5 &   272 & 2262\\  
$\mathrm{CH_2(1)}$
 &-1.3-3 & 1.4-4 & 6.5-7 &  2.4-4 &   241 &  4834
 &-1.3-3 & 6.5-7 & 3.0-7 &  1.3-5 &   278 & 5870
 &-3.1-4 & 3.9-5 & 8.7-10 &  1.2-4 &   321 & 7671\\  
$\mathrm{CH_2(2)}$
 &4.3-1 & 6.4-4 & 6.5-7 &  2.0-4 &   251 &  4383
 &4.3-1 & 9.7-7 & 2.9-7 &  1.3-5 &   327 & 7595
 &4.3-1 & 5.0-5 & 9.6-10 &  7.9-5 &   375 & 9245\\  
$\mathrm{CH_3^+}$
 &-5.6-4 & 1.0-6 & 9.0-7 &  2.7-6 &   158 &  151
 &-4.5-4 & 6.7-7 & 2.3-7 &  1.7-6 &   135 & 133
 &-1.6-4 & 1.2-5 & 8.3-10 &  2.7-6 &   181 & 185\\  
$\mathrm{CH_3}$
 &-1.4-3 & 3.3-5 & 7.8-7 &  5.0-5 &   203 &  7744
 &-1.2-3 & 1.5-6 & 3.0-7 &  8.8-6 &   265 & 6474
 &-3.1-4 & 1.1-5 & 8.7-10 &  1.7-5 &   212 & 6423\\  
$\mathrm{CH_3N}$
 &-2.0-3 & 9.1-6 & 6.3-7 &  1.1-5 &   170 &  4100
 &-1.9-3 & 9.2-7 & 2.8-7 &  1.5-6 &   104 & 3160
 &-9.8-4 & 1.1-5 & 9.7-10 &  1.8-5 &   203 & 5250\\  
$\mathrm{CH_4}$
 &-7.5-4 & 9.3-7 & 8.9-7 &  3.7-6 &   148 &  164
 &-6.0-4 & 3.8-7 & 2.6-7 &  2.9-6 &   109 & 155
 &-1.8-4 & 8.5-6 & 9.8-10 &  1.1-5 &   168 & 239\\  
$\mathrm{CN}$
 &-2.2-3 & 1.2-5 & 5.4-7 &  2.0-5 &   185 &  461
 &-2.2-3 & 1.5-6 & 2.4-7 &  5.6-6 &   110 & 366
 &-1.7-3 & 9.1-6 & 1.0-9 &  1.9-5 &   277 & 724\\  
$\mathrm{CO^+}$
 &-2.0-3 & 1.1-5 & 7.8-7 &  1.8-5 &   174 &  435
 &-2.0-3 & 1.8-6 & 2.9-7 &  4.7-6 &   118 & 357
 &-1.4-3 & 1.2-5 & 9.7-10 &  2.3-5 &   269 & 728\\  
$\mathrm{CO}$
 &-1.3-3 & 1.3-5 & 6.8-7 &  1.9-5 &   162 &  408
 &-1.2-3 & 2.6-6 & 2.7-7 &  2.2-6 &    89 & 328
 &-8.6-4 & 9.7-6 & 9.8-10 &  1.7-5 &   204 & 638\\  
$\mathrm{F^-}$
 &-2.0-3 & 8.8-5 & 5.2-7 &  1.5-4 &   238 &  1429
 &-2.0-3 & 1.2-6 & 2.7-7 &  1.1-5 &   249 & 1359
 &-2.7-4 & 2.0-5 & 8.5-10 &  7.8-5 &   282 & 1576\\  
$\mathrm{FH_2^+}$
 &-2.3-4 & 1.2-6 & 6.4-7 &  2.2-6 &   146 &  99
 &-1.8-4 & 1.5-6 & 2.9-7 &  1.5-6 &    55 & 53
 &-1.5-5 & 1.1-5 & 9.9-10 &  2.0-6 &   178 & 153\\  
$\mathrm{H_2O}$
 &-1.9-3 & 7.4-5 & 4.9-7 &  1.1-4 &   246 &  5704
 &-2.0-3 & 1.2-6 & 3.0-7 &  1.1-5 &   257 & 4679
 &-4.0-4 & 1.3-5 & 9.4-10 &  4.1-5 &   282 & 5928\\  
$\mathrm{H_3}$
 &-3.3-5 & 8.5-7 & 9.7-7 &  7.5-6 &   143 &  51
 &-2.0-5 & 3.0-7 & 3.0-7 &  4.2-6 &   176 & 58
 &-7.9-8 & 5.7-6 & 9.8-10 &  8.4-6 &   204 & 80\\  
$\mathrm{HF}$
 &-2.3-3 & 5.6-5 & 6.9-7 &  7.5-5 &   216 &  1745
 &-2.0-3 & 8.5-7 & 2.9-7 &  1.2-5 &   187 & 1438
 &-2.7-4 & 1.2-5 & 8.5-10 &  3.9-5 &   265 & 2038\\  
$\mathrm{HLi_2}$
 &-2.8-4 & 1.9-5 & 7.7-7 &  2.2-5 &   240 &  1108
 &-1.9-4 & 2.0-6 & 2.9-7 &  6.9-6 &   447 & 1357
 &-1.0-5 & 3.0-5 & 8.8-10 &  2.3-5 &   168 & 755\\  
$\mathrm{HN_2^+}$
 &-2.2-3 & 8.4-6 & 7.8-7 &  1.1-5 &   167 &  720
 &-2.0-3 & 2.2-6 & 2.9-7 &  2.5-6 &    88 & 530
 &-1.5-3 & 1.4-5 & 9.9-10 &  1.8-5 &   232 & 1108\\  
$\mathrm{HNO}$
 &-1.5-3 & 1.4-5 & 7.1-7 &  2.3-5 &   193 &  1551
 &-1.3-3 & 1.0-6 & 2.0-7 &  3.7-6 &   125 & 1300
 &-9.0-4 & 1.0-5 & 9.9-10 &  5.8-6 &   238 & 2430\\  
$\mathrm{Li}$
 &-1.7-5 & 2.1-7 & 6.8-7 &  1.8-6 &   125 &  23
 &-1.2-5 & 1.5-6 & 2.4-7 &  1.1-6 &   145 & 34
 &-4.2-9 & 2.0-5 & 5.1-10 &  1.8-6 &   153 & 32\\  
$\mathrm{Li_2}$
 &-2.0-4 & 2.5-5 & 6.9-7 &  2.5-5 &   242 &  502
 &-1.6-4 & 1.6-6 & 2.9-7 &  5.7-6 &   418 & 625
 &-4.3-6 & 3.3-5 & 9.4-10 &  2.9-5 &   183 & 326\\  
$\mathrm{LiF}$
 &-6.6-4 & 9.6-6 & 6.2-7 &  1.1-5 &   197 &  535
 &-5.6-4 & 2.3-6 & 2.6-7 &  2.4-6 &   103 & 381
 &-2.4-4 & 9.5-6 & 9.6-10 &  8.3-6 &   178 & 638\\  
$\mathrm{LiH(1)}$
 &-1.2-4 & 2.7-5 & 7.4-7 &  1.8-5 &   233 &  1774
 &-8.9-5 & 1.6-6 & 2.9-7 &  6.7-6 &   464 & 2781
 &-6.7-7 & 1.6-5 & 9.1-10 &  2.4-5 &   265 & 2434\\  
$\mathrm{LiH(2)}$
 &-5.9-5 & 8.5-6 & 6.9-7 &  4.9-6 &   212 &  107
 &-5.2-5 & 1.6-6 & 2.9-7 &  7.0-6 &   256 & 101
 &-9.3-8 & 1.9-5 & 9.8-10 &  5.2-6 &   198 & 78\\  
$\mathrm{LiOH}$
 &-1.0-3 & 1.0-5 & 5.4-7 &  1.5-5 &   183 &  854
 &-9.7-4 & 1.3-6 & 3.0-7 &  2.0-6 &   107 & 630
 &-5.7-4 & 9.8-6 & 9.0-10 &  1.1-5 &   247 & 1253\\  
$\mathrm{N}$
 &-5.0-4 & 6.8-5 & 5.0-7 &  6.6-5 &   209 &  351
 &-4.6-4 & 2.3-6 & 3.0-7 &  7.6-6 &   229 & 384
 &-1.1-5 & 1.5-5 & 9.7-10 &  6.1-5 &   297 & 454\\  
$\mathrm{N_2^+}$
 &-2.8-3 & 5.6-6 & 7.6-7 &  1.1-5 &   167 &  300
 &-2.7-3 & 7.8-7 & 2.9-7 &  1.2-6 &   102 & 236
 &-2.2-3 & 8.7-6 & 9.8-10 &  1.7-5 &   263 & 496\\  
$\mathrm{N_2}$
 &-1.5-3 & 8.6-6 & 4.4-7 &  8.2-6 &   160 &  281
 &-1.5-3 & 1.5-6 & 2.6-7 &  2.4-7 &    96 & 214
 &-1.2-3 & 1.0-5 & 8.9-10 &  2.7-5 &   235 & 425\\  
$\mathrm{NH(1)}$
 &-1.3-3 & 4.5-5 & 5.1-7 &  7.3-5 &   244 &  2014
 &-1.3-3 & 2.8-7 & 2.8-7 &  7.6-6 &   291 & 1959
 &-4.5-4 & 1.6-5 & 9.9-10 &  3.8-5 &   230 & 1803\\  
$\mathrm{NH(2)}$
 &-9.7-4 & 1.1-4 & 5.2-7 &  1.6-4 &   233 &  1764
 &-1.0-3 & 1.3-6 & 3.0-7 &  7.3-6 &   272 & 1986
 &-1.4-4 & 1.3-5 & 9.1-10 &  3.9-5 &   256 & 2066\\  
$\mathrm{NH_2^-(1)}$
 &-1.8-3 & 7.0-5 & 5.0-7 &  1.3-4 &   235 &  5430
 &-1.7-3 & 1.3-6 & 2.7-7 &  8.5-6 &   253 & 4772
 &-5.7-4 & 1.2-5 & 9.6-10 &  4.5-5 &   258 & 5775\\  
$\mathrm{NH_2^-(2)}$
 &-1.6-4 & 1.5-6 & 4.8-7 &  1.8-6 &   151 &  96
 &-1.6-4 & 2.9-7 & 2.7-7 &  1.0-6 &    78 & 61
 &-2.0-5 & 4.9-6 & 8.2-10 &  3.8-6 &   211 & 145\\  
$\mathrm{NH_3^+}$
 &-3.7-4 & 1.3-6 & 5.6-7 &  1.7-6 &   175 &  179
 &-3.4-4 & 2.5-6 & 2.9-7 &  1.1-6 &   105 & 117
 &-1.1-4 & 9.9-6 & 9.6-10 &  5.0-6 &   222 & 222\\  
$\mathrm{NH_3}$
 &-1.6-3 & 9.6-6 & 5.8-7 &  1.7-5 &   239 &  13131
 &-1.6-3 & 1.4-7 & 2.9-7 &  5.8-6 &   227 & 10022
 &-4.7-4 & 1.2-5 & 9.7-10 &  2.0-5 &   217 & 10903\\  
$\mathrm{NH_4^+}$
 &-6.1-4 & 1.9-6 & 6.3-7 &  1.8-6 &   162 &  187
 &-5.1-4 & 1.6-6 & 1.9-7 &  9.9-7 &   109 & 160
 &-2.2-4 & 6.4-6 & 7.6-10 &  2.1-6 &   187 & 266\\  
$\mathrm{Na}$
 &-5.2-4 & 4.4-6 & 6.4-7 &  3.9-6 &   164 &  163
 &-3.5-4 & 6.6-7 & 2.2-7 &  1.1-6 &    92 & 127
 &-3.9-5 & 4.5-6 & 8.8-10 &  6.9-6 &   212 & 248\\  
$\mathrm{NaH}$
 &-7.9-4 & 5.4-6 & 7.2-7 &  6.2-6 &   179 &  485
 &-6.7-4 & 1.9-6 & 3.0-7 &  4.6-6 &   107 & 332
 &-7.4-5 & 9.0-6 & 1.0-9 &  9.1-6 &   199 & 604\\  
$\mathrm{Ne}$
 &-2.5-3 & 2.0-5 & 7.7-7 &  3.4-5 &   188 &  328
 &-1.8-3 & 2.9-6 & 3.0-7 &  6.8-6 &   140 & 246
 &-1.5-4 & 1.5-5 & 9.9-10 &  4.1-5 &   223 & 370\\  
$\mathrm{O(1)}$
 &-2.0-3 & 2.1-5 & 4.5-7 &  2.9-5 &   196 &  334
 &-2.0-3 & 1.8-6 & 2.7-7 &  5.3-6 &   228 & 339
 &-1.2-3 & 1.5-5 & 8.9-10 &  2.5-5 &   223 & 358\\  
$\mathrm{O(2)}$
 &-1.2-3 & 7.4-5 & 5.6-7 &  9.1-5 &   197 &  335
 &-1.2-3 & 9.8-7 & 2.8-7 &  7.0-6 &   206 & 325
 &-2.4-4 & 9.4-6 & 9.6-10 &  2.1-5 &   255 & 389\\  
$\mathrm{O(3)}$
 &-2.5-3 & 1.8-5 & 5.3-7 &  2.0-5 &   215 &  353
 &-2.5-3 & 6.0-7 & 3.0-7 &  6.1-6 &   191 & 309
 &-1.6-3 & 2.5-5 & 7.1-10 &  2.4-5 &   223 & 324\\  
$\mathrm{O_2^+}$
 &-2.4-3 & 4.4-6 & 5.6-7 &  6.5-6 &   152 &  284
 &-2.4-3 & 1.8-6 & 3.0-7 &  1.2-6 &   102 & 233
 &-2.1-3 & 7.7-6 & 9.9-10 &  5.6-6 &   201 & 462\\  
$\mathrm{P}$
 &-1.1-3 & 7.0-6 & 6.3-7 &  7.0-6 &   188 &  1149
 &-7.7-4 & 1.3-6 & 2.9-7 &  5.6-7 &   130 & 1017
 &-7.3-6 & 2.1-5 & 8.9-10 &  1.3-5 &   182 & 1254\\  
$\mathrm{SiH_4}$
 &-1.0-3 & 5.6-6 & 5.1-7 &  4.6-6 &   165 &  1755
 &-6.3-4 & 2.2-6 & 2.7-7 &  3.8-7 &    90 & 1256
 &-1.6-4 & 1.7-5 & 9.4-10 &  1.0-5 &   147 & 1961\\  
\end{longtable}
}

 Finally, we compare the accuracy and efficiency of 
SSNSDP with that of SDPNAL using the the performance profiling method proposed 
in~\cite{dolan2002benchmarking}. 
Let  $t_{p,s}$ be the number of iterations or CPU time required to solve
problem $p$ by  the $s$th solvers. 
 Then one computes the ratio $r_{p,s}$  between $t_{p,s}$ over the smallest value obtained by $n_s$ solvers on problem $p$, i.e.,
 $r_{p,s}:=\frac{ t_{p,s}} {\min \{ t_{p,s}: 1\le s\le n_s\} }$. 
 For $ \tau \ge 0$, the value 
 \[ \pi_s(\tau) := \frac{\mbox{ number of problems where } \log_2(r_{p,s}) \le \tau
 } { \mbox{ total number of problems }}\] 
 indicates that solver $s$ is within a
factor $2^\tau\ge 1$ of the performance obtained by the best solver. Then the
performance plot is a curve $\pi_s(\tau)$ for each solver $s$ as a function of $\tau$.
 In Figure \ref{fig:perf}, we show the performance profiles of four criteria $\opt$, $\eta_d$, $\mathrm{err}$ and CPU time, where
$\opt = \max\{\eta_p, \eta_d, \eta_g \}$  represents the the largest value among
three optimal indexes $\eta_p$, $\eta_d$ and $\eta_g$. The  dual infeasibility $\eta_d$ is
chosen since it is often the smallest one among $\eta_p$, $\eta_d$ and $\eta_g$
for both SDPNAL and SSNSDPL.  These figures show that the accuracy and the CPU time of SSNSDPL are better than  SDPNAL on most test problems.


\begin{figure}[!htb]
\centering
\subfigure[opt]{
\includegraphics[width=0.45\textwidth,height=0.4\textwidth]{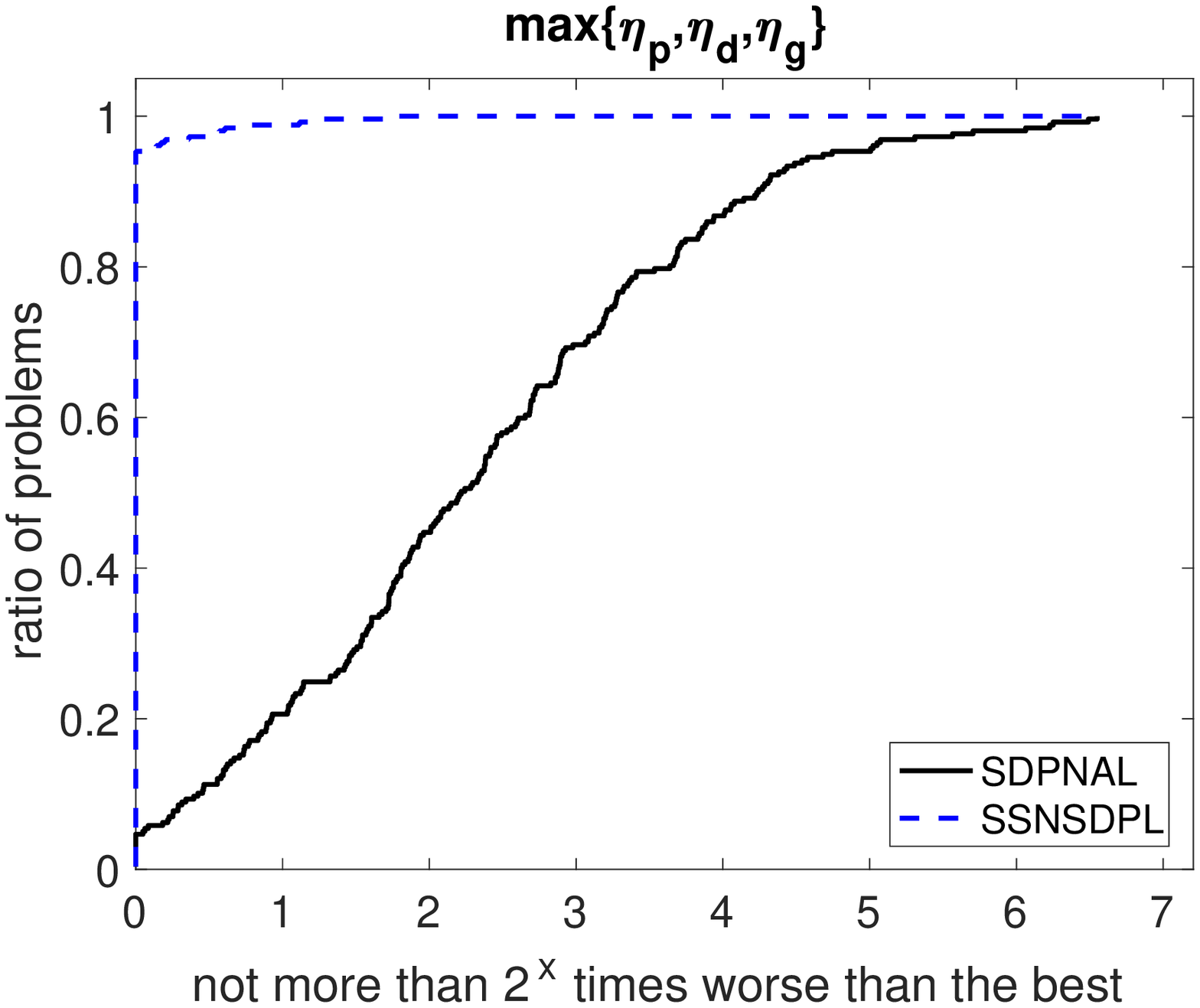}}
\subfigure[$\eta_d$]{
\includegraphics[width=0.45\textwidth,height=0.4\textwidth]{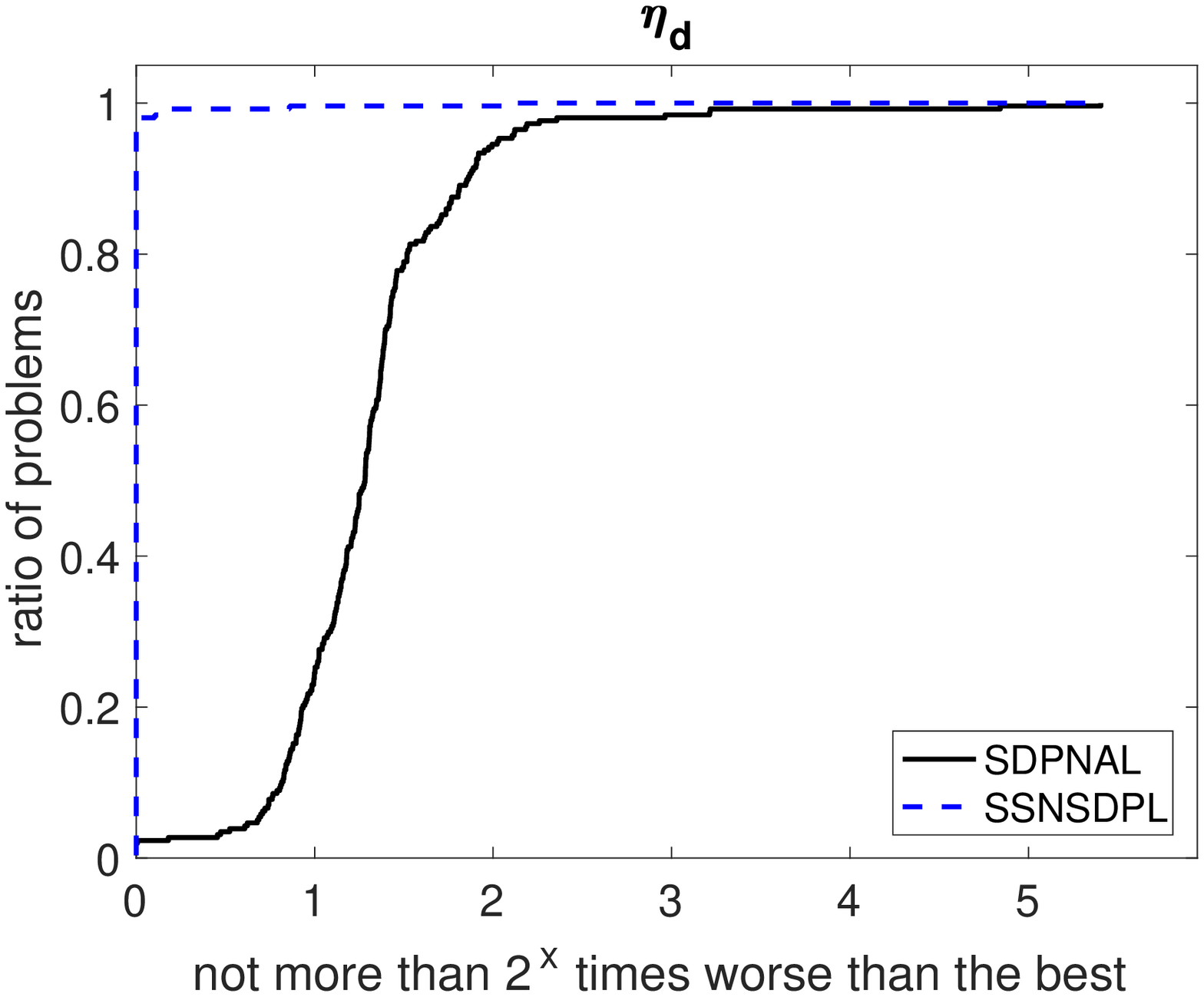}}
\subfigure[error]{
\includegraphics[width=0.45\textwidth,height=0.4\textwidth]{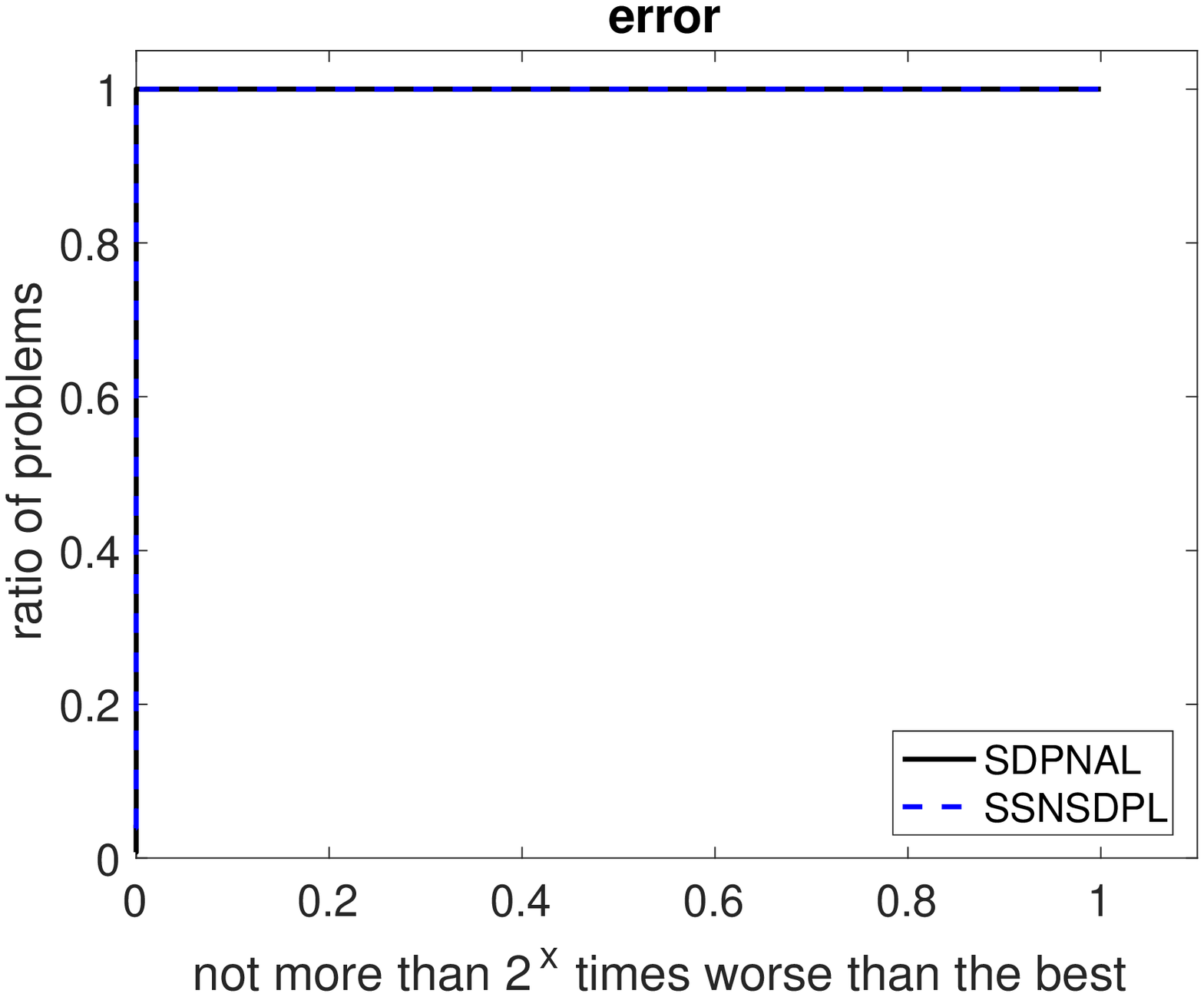}}
\subfigure[CPU]{
\includegraphics[width=0.45\textwidth,height=0.4\textwidth]{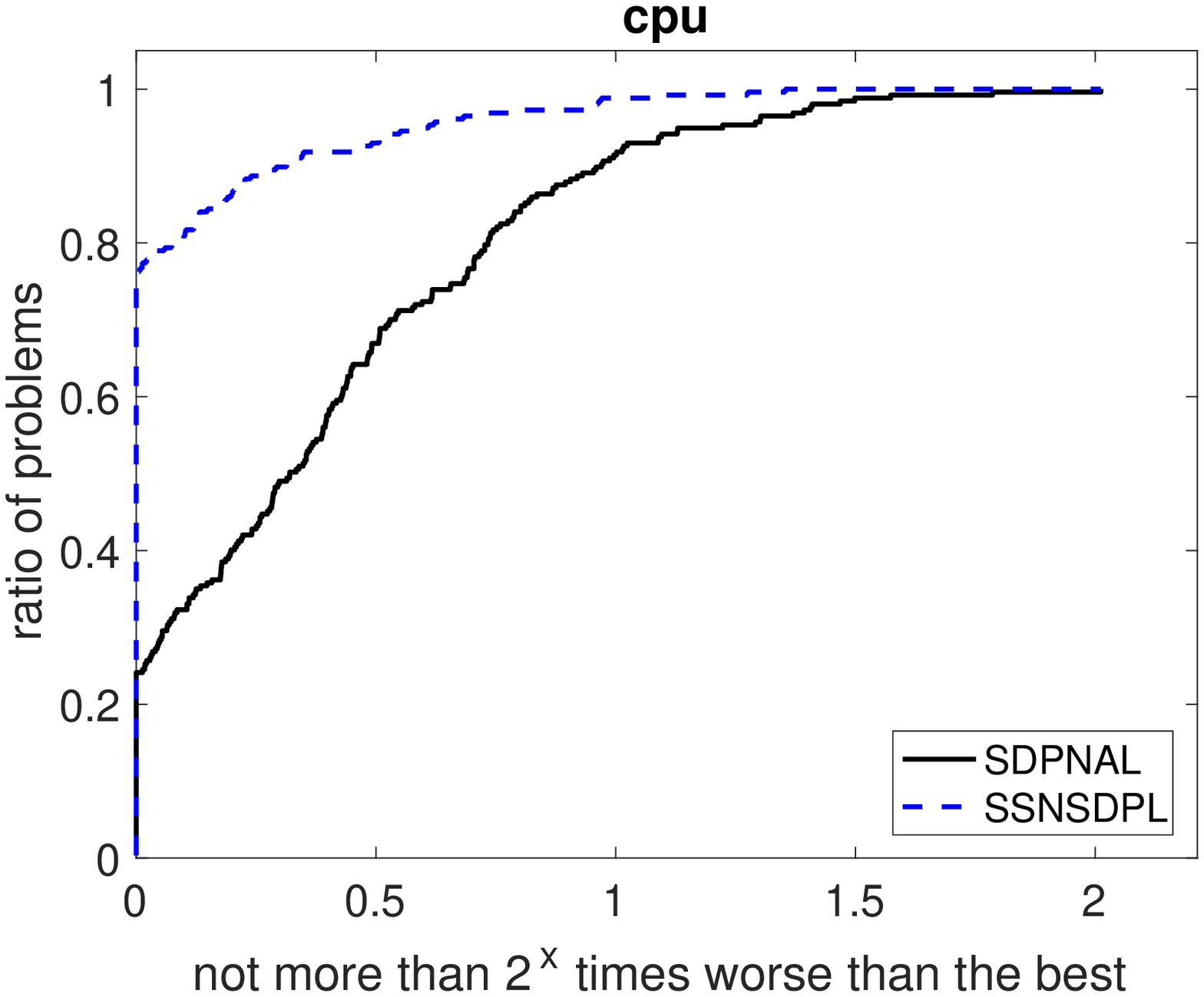}}
\caption{The performance profiles of SDPNAL and SSNSDPL}\label{fig:perf}
\end{figure}

\section{Conclusion}\label{sec:con}
In this paper, we consider the v2-RDM model for approximating the solution to the molecular Schr\"{o}dinger equation. Instead of computing the smallest eigenvalue of the many-electron Schr\"{o}dinger operator, we minimize the total energy of the many-electron system with respect to 1-RDM and 2-RDM subject to some linear constraints imposed to enhance the $N$-representability of the decision variables. The minimization problem to be solved is an SDP. The solution of the SDP can be obtained from the solution of a system of nonlinear equations that can be derived from a fixed point iteration of DRS applied to the original SDP.  We present a semi-smooth Newton type method for solving this set of nonlinear equations. A hyperplane projection technique is applied to improve the stability of the method and achieve global convergence.  We exploit the block diagonal structure and low rank structure of the variables in the SDP to improve the computational efficiency. The computational results show that the proposed semi-smooth Newton method can achieve higher accuracy, and is competitive with the Newton-CG Augmented Lagrangian Method for solving SDPs.

Several components of the proposed semi-smooth Newton method 
can be further improved. For example, since eigenvalue
decomposition is the most expensive step in the procedure 
for computing the Newton direction, a more efficient eigen-decomposition
methods needs to be investigated.  A better global convergent technique is  
also needed to improve the overall performance.


\section*{Acknowledgments} 
The authors are grateful to Prof. Nakata Maho and Prof. Mituhiro Fukuta for
sharing all data sets on 2-RDM. 
We also thank Jinmei Zhang for helping with test problem 
preparation. 

\bibliographystyle{siam}
\bibliography{rdm.bbl}

\begin{thebibliography}{10}

\bibitem{BC2011}
{\sc H.~H. Bauschke and P.~L. Combettes}, {\em Convex analysis and monotone
  operator theory in {H}ilbert spaces}, Springer, New York, 2011.

\bibitem{boyd2011distributed}
{\sc S.~Boyd, N.~Parikh, E.~Chu, B.~Peleato, and J.~Eckstein}, {\em Distributed
  optimization and statistical learning via the alternating direction method of
  multipliers}, Foundations and Trends{\textregistered} in Machine Learning, 3
  (2011), pp.~1--122.

\bibitem{braams2007t1}
{\sc B.~J. Braams, J.~K. Percus, and Z.~Zhao}, {\em The t1 and t2
  representability conditions}, Advances in Chemical Physics, Volume 134:
  Reduced-Density-Matrix Mechanics: With Application to Many-Electron Atoms and
  Molecules, 165 (2007), p.~93.

\bibitem{Chaykin2016}
{\sc D.~Chaykin, C.~Jansson, F.~Keil, M.~Lange, K.~T. Ohlhus, and S.~M. Rump},
  {\em {Rigorous results in electronic structure calculations}},  (2016).

\bibitem{chen2017note}
{\sc L.~Chen, D.~Sun, and K.-C. Toh}, {\em A note on the convergence of admm
  for linearly constrained convex optimization problems}, Computational
  Optimization and Applications, 66 (2017), pp.~327--343.

\bibitem{A.J.Coleman1963}
{\sc A.~J. Coleman}, {\em {Structure of fermion density matrices}}, Rev. Mod.
  Phys., 35 (1963), p.~668.

\bibitem{dolan2002benchmarking}
{\sc E.~D. Dolan and J.~J. Mor{\'e}}, {\em Benchmarking optimization software
  with performance profiles}, Mathematical programming, 91 (2002),
  pp.~201--213.

\bibitem{DR1956}
{\sc J.~Douglas and H.~H. Rachford}, {\em On the numerical solution of heat
  conduction problems in two and three space variables}, Trans. Amer. Math.
  Soc., 82 (1956), pp.~421--439.

\bibitem{eckstein1992douglas}
{\sc J.~Eckstein and D.~P. Bertsekas}, {\em On the douglas--rachford splitting
  method and the proximal point algorithm for maximal monotone operators},
  Mathematical Programming, 55 (1992), pp.~293--318.

\bibitem{EB1992}
{\sc J.~Eckstein and D.~P. Bertsekas}, {\em On the {D}ouglas-{R}achford
  splitting method and the proximal point algorithm for maximal monotone
  operators}, Math. Program., 55 (1992), pp.~293--318.

\bibitem{erdahl1978representability}
{\sc R.~Erdahl}, {\em Representability}, International Journal of Quantum
  Chemistry, 13 (1978), pp.~697--718.

\bibitem{gabay1976dual}
{\sc D.~Gabay and B.~Mercier}, {\em A dual algorithm for the solution of
  nonlinear variational problems via finite element approximation}, Computers
  \& Mathematics with Applications, 2 (1976), pp.~17--40.

\bibitem{Garrod1964}
{\sc C.~Garrod and J.~K. Percus}, {\em {Reduction of the N-Particle Variational
  Problem}}, J. Math. Phys., 5 (1964), p.~1756.

\bibitem{Gidofalvi2005}
{\sc G.~Gidofalvi and D.~A. Mazziotti}, {\em {Spin and symmetry adaptation of
  the variational two-electron reduced-density-matrix method}}, Phys. Rev. A -
  At. Mol. Opt. Phys., 72 (2005), pp.~1--8.

\bibitem{LM1979}
{\sc P.-L. Lions and B.~Mercier}, {\em Splitting algorithms for the sum of two
  nonlinear operators}, SIAM J. Numer. Anal., 16 (1979), pp.~964--979.

\bibitem{Liu2007}
{\sc Y.~K. Liu, M.~Christandl, and F.~Verstraete}, {\em {Quantum computational
  complexity of the N-representability problem: QMA complete}}, Phys. Rev.
  Lett., 98 (2007), pp.~1--4.

\bibitem{mayer1955electron}
{\sc J.~E. Mayer}, {\em Electron correlation}, Physical Review, 100 (1955),
  p.~1579.

\bibitem{mazziotti2006variational}
{\sc D.~A. Mazziotti}, {\em Variational reduced-density-matrix method using
  three-particle n-representability conditions with application to
  many-electron molecules}, Physical Review A, 74 (2006), p.~032501.

\bibitem{Mazziotti2011}
{\sc D.~A. Mazziotti}, {\em {Large-scale semidefinite programming for
  many-electron quantum mechanics}}, Phys. Rev. Lett., 106 (2011), pp.~7--10.

\bibitem{Mifflin1977}
{\sc R.~Mifflin}, {\em Semismooth and semiconvex functions in constrained
  optimization}, SIAM J. Control Optim., 15 (1977), pp.~959--972.

\bibitem{Nakata2008}
{\sc M.~Nakata, B.~J. Braams, K.~Fujisawa, M.~Fukuda, J.~K. Percus,
  M.~Yamashita, and Z.~Zhao}, {\em {Variational calculation of second-order
  reduced density matrices by strong N -representability conditions and an
  accurate semidefinite programming solver}}, J. Chem. Phys., 128 (2008).

\bibitem{Nakata2001}
{\sc M.~Nakata, H.~Nakatsuji, M.~Ehara, M.~Fukuda, K.~Nakata, and K.~Fujisawa},
  {\em {Variational calculations of fermion second-order reduced density
  matrices be semidefinite programming algorithm}}, J. Chem. Phys., 114 (2001),
  pp.~8282--8292.

\bibitem{pataki1998rank}
{\sc G.~Pataki}, {\em On the rank of extreme matrices in semidefinite programs
  and the multiplicity of optimal eigenvalues}, Mathematics of operations
  research, 23 (1998), pp.~339--358.

\bibitem{QS1993}
{\sc L.~Q. Qi and J.~Sun}, {\em A nonsmooth version of {N}ewton's method},
  Math. Programming, 58 (1993), pp.~353--367.

\bibitem{RW1998}
{\sc R.~T. Rockafellar and R.~J.-B. Wets}, {\em Variational analysis},
  Springer-Verlag, Berlin, 1998.

\bibitem{SaaCheSho1003}
{\sc Y.~Saad, J.~R. Chelikowsky, and S.~M. Shontz}, {\em Numerical methods for
  electronic structure calculations of materials}, SIAM Rev., 52 (2010),
  pp.~3--54.

\bibitem{ci}
{\sc D.~C. Sherrill and H.~F. Schaefer}, {\em The configuration interaction
  method: Advances in highly correlated approaches}, Advances in Quantum
  Chemistry, 34 (1999), pp.~143 -- 269.

\bibitem{SS1999}
{\sc M.~V. Solodov and B.~F. Svaiter}, {\em A globally convergent inexact
  {N}ewton method for systems of monotone equations}, in Reformulation:
  nonsmooth, piecewise smooth, semismooth and smoothing methods ({L}ausanne,
  1997), M.~Fukushima and L.~Qi, eds., vol.~22, Kluwer Academic Publishers,
  Dordrecht, 1999, pp.~355--369.

\bibitem{SS2002}
{\sc D.~Sun and J.~Sun}, {\em Semismooth matrix-valued functions}, Math. Oper.
  Res., 27 (2002), pp.~150--169.

\bibitem{MR3342702}
{\sc D.~Sun, K.-C. Toh, and L.~Yang}, {\em A convergent 3-block semiproximal
  alternating direction method of multipliers for conic programming with 4-type
  constraints}, SIAM J. Optim., 25 (2015), pp.~882--915.

\bibitem{Szalay2015}
{\sc S.~Szalay, M.~Pfeffer, V.~Murg, G.~Barcza, F.~Verstraete, R.~Schneider,
  and {\"{O}}.~Legeza}, {\em {Tensor product methods and entanglement
  optimization for ab initio quantum chemistry}}, Int. J. Quantum Chem., 115
  (2015), pp.~1342--1391.

\bibitem{cc}
{\sc J.~{{\v C}{\'{\i}}{\v z}ek}}, {\em {On the Correlation Problem in Atomic
  and Molecular Systems. Calculation of Wavefunction Components in Ursell-Type
  Expansion Using Quantum-Field Theoretical Methods}}, Journal of Chemical
  Physics, 45 (1966), pp.~4256--4266.

\bibitem{Wen2010a}
{\sc Z.~Wen, D.~Goldfarb, and W.~Yin}, {\em {Alternating direction augmented
  Lagrangian methods for semidefinite programming}}, Math. Program. Comput., 2
  (2010), pp.~203--230.

\bibitem{xiao2016regularized}
{\sc X.~Xiao, Y.~Li, Z.~Wen, and L.~Zhang}, {\em A regularized semi-smooth
  newton method with projection steps for composite convex programs}, arXiv
  preprint arXiv:1603.07870,  (2016).

\bibitem{Yang2015}
{\sc L.~Yang, D.~Sun, and K.~C. Toh}, {\em {SDPNAL+: a majorized semismooth
  Newton-CG augmented Lagrangian method for semidefinite programming with
  nonnegative constraints}}, Math. Program. Comput., 7 (2015), pp.~331--366.

\bibitem{Zhao2009}
{\sc X.-y. Zhao, D.~Sun, and K.~C. Toh}, {\em {A Newton-CG Augmented Lagrangian
  Method for Semidefinite Programming ∗}}, SIAM J. Optim., 117543 (2009),
  pp.~1--40.

\bibitem{Zhao2004}
{\sc Z.~Zhao, B.~J. Braams, M.~Fukuda, M.~L. Overton, and J.~K. Percus}, {\em
  {The reduced density matrix method for electronic structure calculations and
  the role of three-index representability conditions.}}, J. Chem. Phys., 120
  (2004), pp.~2095--104.

\end{thebibliography}

\end{document}